\newtheorem{theorem}{Theorem}
\newtheorem{lemma}{Lemma}
\newtheorem{proposition}{Proposition}
\newtheorem{remark}{Remark}
\newbox\qedbox
\newenvironment{proof}{\smallskip\noindent{\bf Proof.}\hskip \labelsep}%
                        {\hfill\penalty10000\copy\qedbox\par\medskip}
\newcommand{\bfR}{{\Bbb R}}
\newcommand{\bfC}{{\Bbb C}}
\newcommand{\ii}{\text{i}}
\newcommand{\e}{\text{e}}
\newcommand{\dd}{\text{d}}
\newcommand{\Om}{\Omega}
\newcommand{\nn}{\nonumber}
\newcommand\be{\begin{equation}}
\newcommand\ee{\end{equation}}
\newcommand{\bea}{\begin{eqnarray}}
\newcommand{\eea}{\end{eqnarray}}
\newcommand\berr{\begin{eqnarray*}}
\newcommand\eerr{\end{eqnarray*}}
\begin{document}

\title{Long-time asymptotics for the Sasa--Satsuma equation via nonlinear steepest descent method}
\author{ Boling Guo$^{a}$,\, Nan Liu$^{b,}$\footnote{Corresponding author.},\, Yufeng Wang$^{c}$\\
$^a${\small{\em Institute of Applied Physics and Computational Mathematics,  Beijing 100088, P.R. China}} \\
$^b${\small{\em The Graduate School of China Academy of Engineering Physics, Beijing 100088, P.R. China}}\\$^c${\small{\em College of Science, Minzu University of China, Beijing 100081, P.R. China}}\setcounter{footnote}{-1}\footnote{E-mail addresses: gbl@iapcm.ac.cn (B. Guo), ln10475@163.com (N. Liu), yufeng\_0617@126.com (Y. Wang).} \\
}

\date{}
\maketitle

\begin{quote}
{{{\bfseries Abstract.} We formulate a $3\times3$ Riemann--Hilbert problem to solve the Cauchy problem for the Sasa--Satsuma equation on the line, which allows us to give a representation for the solution of Sasa--Satsuma equation. We then apply the method of nonlinear steepest descent to compute the long-time asymptotics of the Sasa--Satsuma equation.

}

 {\bf Keywords:} Sasa--Satsuma equation,  Nonlinear steepest descent method, Long-time asymptotics.}
\end{quote}

\section{Introduction}
\setcounter{equation}{0}

In the context of inverse scattering, the first work to provide explicit formulas (i.e., depending only on initial conditions) for large-time asymptotics of solutions is due to Zakharov and Manakov \cite{ZM} in the context of the nonlinear Schr\"odinger (NLS) equation. In this setting, the inverse scattering map and the reconstruction of the solution (potential) is formulated through an oscillatory Riemann--Hilbert (RH) problem. Then the now well-known nonlinear steepest descent method introduced by Deift and Zhou in \cite{PD} provides a detailed rigorous proof to calculate the large-time asymptotic behaviors of the integrable nonlinear evolution equations. This approach has been successfully applied in determining asymptotic formulas for the initial value problems of a number of integrable systems associated with $2\times2$ matrix sprectral problems including the mKdV equation \cite{PD}, the defocusing NLS equation \cite{PD1}, the KdV equation \cite{GT}, the Hirota equation \cite{HL}, the derivative NLS equation \cite{LP}, the Fokas--Lenells equation \cite{XJ} and the Kundu--Eckhaus equation \cite{DS}. Moreover, by combining the ideas of \cite{PD} with the so-called ``$g$-function mechanism" \cite{PD2}, it is also possible to study asymptotics of solutions of the initial value problems with shock-type oscillating initial data \cite{RB}, nondecaying step-like initial data \cite{AB3,XJ3,KM} and the initial-boundary value problems with $t$-periodic boundary condition \cite{AB,ST} for various integrable equations. Recently, Lenells et al. also has been derived some interesting asymptotic formulas for the solution of derivative NLS equation on the half-line under the initial and boundary values lie in the Schwartz class \cite{AL} by using the steepest descent method. However, there is just a little literature \cite{AB1,AB2,XG} about the study of long-time asymptotics for the integrable nonlinear evolution equations associated with $3\times3$ matrix spectral problems. Therefore, it is necessary and important to consider the large-time asymptotic behavior of the integrable equations with $3\times3$ Lax pairs.

Our present paper aim to consider the long-time asymptotics for the initial value problem of the Sasa--Satsuma (SS) equation \cite{SS},
\begin{equation}\label{1.1}
u_t=u_{xxx}+6|u|^2u_x+3u(|u|^2)_x,
\end{equation}
with the initial datum
\be
u(x,0)=u_0(x),
\ee
where $u_0(x)$ belongs to the Schwartz space $S(\bfR)$. The SS equation derived in \cite{SS} is of considerable interest for applications and is widely used in nonlinear optics (see \cite{KP} and references therein) because the integrable cases of the so-called higher order NLS equation \cite{YK,YK1} describing the propagation of short pulses in optical fibers are related through a gauge transformation either to the SS equation \cite{SS} or to the so-called Hirota equation \cite{HR}. Due to the important role played in both physics and mathematics, the SS equation has attracted much attention and various works were presented. For example, by employing $3\times3$ Lax pair, the inverse scattering transform formalism for the initial value problem of the SS equation has been obtained in \cite{SS}. Soliton solution, B\"acklund transformation, and conservation laws for the SS equation were found in the optical fiber communications \cite{LY}. Twisted rogue-wave pairs in the SS equation were got by the author in \cite{CSH}. Squared eigenfunctions are derived for the SS equation in \cite{YJK}. The initial-boundary value problem of the SS equation on the half-line was analysed in \cite{JX1} by using the Fokas method. There also exists some interesting results for the so-called coupled SS equations, we refer the readers to see \cite{ZHQ,AS,WJ,KN,LX}.

Our purpose here is to derive the long-time asymptotics of the solution $u(x,t)$ of SS equation \eqref{1.1} with a $3\times3$ Lax pair on the line by performing a nonlinear steepest descent analysis of the associated RH problem. Developing and extending the unified transform approach announced in \cite{FK,JL}, our primary task is to formulate the main RH problem corresponding to the equation \eqref{1.1}. As a result, we can give a representation of the solution to the Cauchy problem \eqref{1.1} in terms of the solution of the corresponding $3\times3$ RH problem with the jump matrix $J(x,t,k)$ given in terms of the scattering matrix $s(k)$. By using the perfect symmetry of the jump matrix, we introduce a $1\times2$ vector-valued spectral function $\rho(k)$ and rewrite our main RH problem as a $2\times2$ block ones. This procedure is more convenient for the following long-time asymptotic analysis compared with the analysis in \cite{AB1,AB2}. As we all known, the first important step of the steepest descent method is to split the jump matrix $J(x,t,k)$ into an appropriate upper/lower triangular form. This immediately leads to construct a $\delta(k)$ function to remove the middle matrix term, however, the function $\delta$ satisfies a $2\times2$ matrix RH problem in our present problem. The unsolvability of the $2\times2$ matrix function $\delta(k)$ is a challenge for us when we perform the scaling transformation to reduce the RH problem to a model RH problem. Fortunately, the function $\det\delta(k)$ can be explicitly solved by the Plemelj formula because $\det\delta(k)$ satisfies a scalar RH  problem. Therefore, we follow the idea introduced in \cite{XG} to use the available function $\det\delta(k)$ to approximate the function $\delta(k)$ by error control.
On the other hand, the spectral curve for SS equation \eqref{1.1} possesses two stationary points, which different from the case of coupled NLS system considered in \cite{XG} where the phase function has a single critical point. The symmetry of the spectral function $\rho(k)$ plays an important role in study the solution of the model RH problem near the critical point $-k_0$. Therefore, the study of the long-time asymptotics for the initial value problem of \eqref{1.1} on the line is more involved. These are some innovation points of the present paper.

The organization of this paper is as follows. In Section 2, we formulate the main RH problem and show how the solution of the SS equation \eqref{1.1} can be expressed in terms of the solution of the $3\times3$ matrix RH problem. In Section 3, we transform the original RH problem to a suitable form and derive the long-time asymptotic behavior of the solution of the SS equation \eqref{1.1}.

\section{Riemann--Hilbert problem}
\setcounter{equation}{0}
\setcounter{lemma}{0}
\setcounter{theorem}{0}
In this section, we aim to formulate a RH problem to solve the Cauchy problem of the SS equation \eqref{1.1}.
\subsection{The Lax pair and basic eigenfunctions}
The Lax pair of equation \eqref{1.1} is
\begin{equation}\label{2.1}
\left\{
\begin{aligned}
&\psi_x(x,t;k)=-\ii k\sigma\psi(x,t;k)+U(x,t)\psi(x,t;k),\\
&\psi_t(x,t;k)=4\ii k^3\sigma\psi(x,t;k)+V(x,t;k)\psi(x,t;k),\\
\end{aligned}
\right.
\end{equation}
where $\psi(x,t;k)$ is a $3\times3$ matrix-valued function, $k\in\bfC$ is the spectral parameter, and
\begin{equation}\label{2.2}
\sigma={\left( \begin{array}{ccc}
1 ~& 0 ~& 0\\
0 ~& 1 ~& 0\\
0 ~& 0 ~& -1\\
\end{array}
\right )},\quad
U(x,t)={\left( \begin{array}{ccc}
0 & 0 & u(x,t) \\
 0 & 0 & \bar{u}(x,t)\\
-\bar{u}(x,t) & -u(x,t)  & 0\\
\end{array}
\right )},
\end{equation}
\begin{equation}\label{2.3}
V(x,t;k)=4(|u|^2-k^2)U(x,t)-2\ii kV_1(x,t)+V_2(x,t),
\end{equation}
with
\begin{equation}
V_1={\left( \begin{array}{ccc}
|u|^2 ~& u^2 ~& u_x\\
\bar{u}^2 ~& |u|^2 ~& \bar{u}_x\\
\bar{u}_x ~& u_x ~& -2|u|^2\\
\end{array}
\right )},\quad
V_2={\left( \begin{array}{ccc}
u_x\bar{u}-u\bar{u}_x ~& 0 ~& u_{xx} \\
 0 ~& u\bar{u}_x-u_x\bar{u} ~& \bar{u}_{xx}\\
-\bar{u}_{xx} ~& -u_{xx}  ~& 0\\
\end{array}
\right )}.\nn
\end{equation}
Introducing a new eigenfunction $\mu(x,t;k)$ by
\begin{equation}
\psi(x,t;k)=\mu(x,t;k) \e^{-\ii(kx-4k^3t)\sigma}\nn,
\end{equation}
we obtain the equivalent Lax pair
\begin{equation}\label{2.4}
\left\{
\begin{aligned}
&\mu_x(x,t;k)+\ii k[\sigma,\mu(x,t;k)]=U(x,t)\mu(x,t;k),\\
&\mu_t(x,t;k)-4\ii k^3[\sigma,\mu(x,t;k)]=V(x,t;k)\mu(x,t;k).\\
\end{aligned}
\right.
\end{equation}

We define two eigenfunctions $\{\mu_j\}_1^2$ of $x$-part of equation \eqref{2.4} by the following Volterra integral equations
\bea
&&\mu_1(x,t;k)=I+\int_{-\infty}^x\e^{-\ii k(x-x')\hat{\sigma}}[U(x',t)\mu_1(x',t;k)]\dd x',\label{2.5}\\
&&\mu_2(x,t;k)=I-\int^{+\infty}_x\e^{-\ii k(x-x')\hat{\sigma}}[U(x',t)\mu_2(x',t;k)]\dd x',\label{2.6}
\eea
where $\hat{\sigma}$ denote the operators which act on a $3\times3$ matrix $X$ by $\hat{\sigma}X=[\sigma,X]$, then $\e^{\hat{\sigma}}X=\e^{\sigma} X\e^{-\sigma}$.
Thus, it can be shown that the functions $\{\mu_j\}^2_1$ are bounded and analytical for $k\in\bfC$ while $k$ belongs to
\begin{eqnarray}\label{2.7}
\begin{aligned}
&&\mu_1:~(\bfC_+,\bfC_+,\bfC_-),\\
&&\mu_2:~(\bfC_-,\bfC_-,\bfC_+),\\
\end{aligned}
\end{eqnarray}
where $\bfC_+$ and $\bfC_-$ denote the upper and lower half complex $k$-plane, respectively. And if we let $-\ii k\sigma=\mbox{diag}(z_1,z_2,z_3)$, then we have
\begin{eqnarray*}
&&\bfC_+=\{k\in {\bfC}|\mbox{Re} z_1=\mbox{Re} z_2>\mbox{Re}z_3\},\\
&&\bfC_-=\{k\in{\bfC}|\mbox{Re}z_1=\mbox{Re}z_2<\mbox{Re}z_3\}.
\end{eqnarray*}

The solutions of the system of differential equation \eqref{2.4} must be related by a
matrix independent of $x$ and $t$, therefore,
\bea\label{2.8}
\mu_1(x,t;k)=\mu_2(x,t;k)\e^{-\ii(kx-4k^3t)\hat{\sigma}}s(k),
\eea
Evaluation at $x\rightarrow+\infty,t=0$ gives
\be\label{2.9}
s(k)=\lim_{x\rightarrow+\infty}\e^{\ii kx\hat{\sigma}}\mu_1(x,0;k),
\ee
that is,
\be\label{2.10}
s(k)=I+\int_{-\infty}^{+\infty}\e^{\ii kx\hat{\sigma}}[U(x,0)\mu_1(x,0;k)]\dd x.
\ee
In fact, the matrix-valued spectral function $s(k)$ can be determined in terms of the initial value $u_0(x)$.

The fact that $U(x,t)$ is traceless together with equations \eqref{2.5}-\eqref{2.6} implies $\mbox{det}\mu_j(x,t;k)=1$ for $j=1,2$. Thus, $\mbox{det}s(k)=1$. On the other hand, if we denote $\tilde{U}(x,t;k)=-\ii k\sigma+U(x,t)$, then we find that $$\tilde{U}^\dagger(x,t;\bar{k})=-\tilde{U}(x,t;k),\quad \overline{\tilde{U}(x,t;-\bar{k})}=\Gamma U(x,t;k)\Gamma,$$
where
$$\Gamma=\begin{pmatrix}
0~& 1 ~& 0\\
1~& 0 ~& 0\\
0~& 0~& 1
\end{pmatrix}.$$
Thus, we get
\be\label{2.11}
\mu_j^\dagger(x,t,\bar{k})=\mu_j^{-1}(x,t,k),\quad \mu_j(x,t;k)=\Gamma\overline{\mu_j(x,t;-\bar{k})}\Gamma,~ j=1,2,
\ee
where ``$\dagger$" denotes Hermitian conjugate.

Therefore,
\be\label{2.12}
s^\dag(\bar{k})=s^{-1}(k),\quad s(-k)=\Gamma\overline{s(\bar{k})}\Gamma.
\ee
\subsection{The formulation of Riemann--Hilbert problem}
In order to formulate a RH problem, we should define the following eigenfunctions.
For each $n=1,2$, a solution $M_n(x,t;k)$ of $x$-part of \eqref{2.4} is defined by the following system of integral equations:
\be\label{2.13}
(M_n)_{jl}(x,t;k)=\delta_{jl}+\int^x_{\infty^n_{jl}}\bigg(\e^{-\ii k (x-x')\hat{\sigma}}[U(x',t)M_n(x',t;k)]\bigg)_{jl}\dd x',~k\in D_n,~j,l=1,2,3,
\ee
where $D_1=\bfC_+,D_2=\bfC_-$, and the limits $\infty^n_{jl}$ of integration, $n=1,2$, $j,l=1,2,3,$ are defined by
\be\label{2.14}
\infty^n_{jl}=\left\{
\begin{aligned}
&+\infty,~\mbox{if}~\mbox{Re}z_j(k)>\mbox{Re}z_l(k),\\
&-\infty,~\mbox{if}~\mbox{Re}z_j(k)\leq\mbox{Re}z_l(k).\\
\end{aligned}
\right.
\ee
According to the definition of the $\infty^n$, we find that
\begin{gather}\label{2.15}
\infty^1={\left( \begin{array}{ccc}
-\infty ~& -\infty ~& +\infty \\
-\infty ~& -\infty ~& +\infty \\
-\infty ~& -\infty ~& -\infty \\
\end{array}
\right )},\quad
\infty^2={\left( \begin{array}{ccc}
 -\infty ~& -\infty ~& -\infty\\
 -\infty ~& -\infty ~& -\infty\\
+\infty ~& +\infty ~& -\infty\\
\end{array}
\right )}.
\end{gather}

According to the similar proof in \cite{JL}, we have the following proposition of $M_n$.
\begin{proposition}\label{pro2.1}
For each $n=1,2$, the function $M_n(x,t;k)$ is well defined by equation \eqref{2.13} for $k\in \bar{D}_n$. $M_n$ is bounded and analytical as a function of $k\in D_n$ away from a possible discrete set of singularities $\{k_j\}$ at which the Fredholm determinant vanishes. Moreover, $M_n$ admits a bounded and continuous extension to $\bar{D}_n$ and
\be\label{2.16}
M_n(x,t;k) =I+O\bigg(\frac{1}{k}\bigg),~~k\rightarrow\infty,~~k\in D_n.
\ee
\end{proposition}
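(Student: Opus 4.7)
The plan is to treat \eqref{2.13} column by column. Looking at \eqref{2.15}, each column of $M_n$ satisfies a closed system of three coupled scalar integral equations, whose integration limits are fixed within each column: some rows are integrated from $-\infty$ up to $x$ and others from $+\infty$ down to $x$. For $M_1$, the first two columns are purely of Volterra type (all three limits equal $-\infty$), whereas the third column is of mixed (Fredholm) type; for $M_2$ the roles are reversed. I would handle these two cases separately.

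The first step is a pointwise estimate on the exponential kernel. The $(j,l)$ entry of $\e^{-\ii k(x-x')\hat\sigma}[U(x',t)M_n(x',t;k)]$ carries a factor $\e^{-\ii k(x-x')(\sigma_j-\sigma_l)}$. By the definition \eqref{2.14} of $\infty^n_{jl}$, the sign of $x-x'$ on the interval of integration is chosen so that $\mbox{Re}[-\ii k(x-x')(\sigma_j-\sigma_l)]\leq 0$ for all $k\in\bar D_n$, so this exponential factor has modulus at most $1$ throughout. Consequently the integrand is dominated entrywise by $|U(x',t)|\,\|M_n(x',t;k)\|$, which is integrable in $x'$ since $U(\cdot,t)\in S(\bfR)$.

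For the Volterra columns, I would run the standard Picard iteration: absolute convergence of the Neumann series on any compact $x$-set, uniformly in $k\in\bar D_n$, follows directly from $\int|U(\cdot,t)|\,\dd x<\infty$, giving a unique bounded solution with no singularities, analytic in $k\in D_n$ and continuous up to $\bar D_n$. For the mixed columns I would rewrite the system in the form $(I-K(k))f(k)=g$, where $K(k)$ acts on $C_b(\bfR)^3$, depends analytically on $k\in D_n$ through the exponential kernel, and is compact by Arzel\`a--Ascoli using the Schwartz decay of $U$. The analytic Fredholm alternative then supplies a meromorphic solution in $D_n$, with poles precisely at the discrete zeros $\{k_j\}$ of $\det(I-K(k))$; continuity up to $\bar D_n$ away from those points follows from dominated convergence applied to the resolvent representation.

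Finally, the large-$k$ expansion \eqref{2.16} follows by substituting the zeroth-order approximation $M_n\approx I$ back into \eqref{2.13} and integrating by parts in $x'$: the oscillatory factor $\e^{-\ii k(x-x')(\sigma_j-\sigma_l)}$ in the off-diagonal entries combined with the smoothness and rapid decay of $U(\cdot,t)$ yields a $1/k$ gain after one integration by parts, while the diagonal entries give the leading $\delta_{jl}$ plus a $1/k$ correction produced by one further iteration. I expect the main technical obstacle to be the mixed-limit column: it is this piece that blocks a pure Volterra argument and forces the Fredholm-theoretic detour responsible for the possible discrete singularities $\{k_j\}$ in the statement.
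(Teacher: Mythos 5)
The paper does not actually prove this proposition; it simply defers to the ``similar proof'' in the cited reference [JL] (Lenells, Phys.\ D 241 (2012)). Your outline reconstructs essentially that standard argument: the sign choice in \eqref{2.14} makes the kernel bounded on $\bar D_n$, the columns whose limits are all $-\infty$ are handled by Volterra iteration, the mixed columns by the analytic Fredholm theorem (which is exactly where the possible discrete singularities $\{k_j\}$ come from), and \eqref{2.16} by iteration plus integration by parts. So in approach you are aligned with the proof the paper is pointing to, and your identification of which columns are Volterra and which are Fredholm for $M_1$ and $M_2$ is correct.

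One step needs repair: your derivation of \eqref{2.16} attributes the $1/k$ gain in \emph{all} off-diagonal entries to the oscillatory factor $\e^{-\ii k(x-x')(\sigma_j-\sigma_l)}$. But $\sigma_1=\sigma_2=1$, so for the $(1,2)$ and $(2,1)$ entries this factor is identically $1$ and integration by parts produces nothing. The decay of these entries must instead be read off from the block structure of $U$ in \eqref{2.2}: since $U_{11}=U_{12}=U_{21}=U_{22}=0$, one has $(UM_n)_{12}=U_{13}(M_n)_{32}$ and $(UM_n)_{21}=U_{23}(M_n)_{31}$, i.e.\ the non-oscillatory off-diagonal entries couple only to the third row, whose entries are genuinely oscillatory ($\sigma_3\neq\sigma_1,\sigma_2$) and hence $O(1/k)$; one then feeds this back into the integral equation, exactly as you do for the diagonal. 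With that observation inserted, the asymptotics argument closes. (A minor further remark: to invoke the analytic Fredholm alternative you should also note that the Fredholm determinant is not identically zero, e.g.\ because it tends to $1$ as $k\to\infty$ in $D_n$; and Arzel\`a--Ascoli on the non-compact line requires the uniform limits at $x\to\pm\infty$ supplied by the Schwartz decay of $U$, which you implicitly use.)
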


For each $n=1,2$, we define spectral functions $S_n(k)$ by
\be\label{2.17}
M_n(x,t;k)=\mu_2(x,t;k)\e^{-\ii(kx-4k^3t)\hat{\sigma}}S_n(k),~~k\in D_n,~~n=1,2.
\ee

According to \eqref{2.13}, the $\{S_n(k)\}^2_1$ can be computed from the spectral function $s(k)$.

\begin{proposition}\label{pro2.2}
The $S_n(k)$ defined in \eqref{2.17} can be expressed in terms of the entries of $s(k)$ as follows:
\bea\label{2.18}
&&S_1(k)=\begin{pmatrix}
s_{11} ~& s_{12} ~& 0\\[4pt]
s_{21} ~& s_{22} ~& 0 \\[4pt]
s_{31} ~& s_{32} ~& \frac{1}{\bar{s}_{33}}\\
\end{pmatrix},\quad
S_2(k)=\begin{pmatrix}
\frac{\bar{s}_{22}}{s_{33}} ~& -\frac{\bar{s}_{21}}{s_{33}} ~& s_{13}  \\[4pt]
-\frac{\bar{s}_{12}}{s_{33}} ~& \frac{\bar{s}_{11}}{s_{33}} ~& s_{23}  \\[4pt]
0 ~& 0 ~& s_{33} \\
\end{pmatrix}.
\eea
\end{proposition}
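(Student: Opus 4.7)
The plan is to combine the scattering relation $\mu_1=\mu_2\,\e^{-\ii\theta\hat\sigma}s$ (writing $\theta:=kx-4k^3t$) with the defining relation \eqref{2.17} into the single identity $M_n(x,t;k)=\mu_1(x,t;k)\,\e^{-\ii\theta\hat\sigma}\bigl(s^{-1}(k)S_n(k)\bigr)$, and to note that $s^{-1}S_n$ is $x,t$--independent. I would then read off the entries of $S_n$ by passing to $x\to\pm\infty$ in whichever of the two representations of $M_n$ (the one via $\mu_1$ or the original one via $\mu_2$) is normalized at that end, and matching against the normalization that is built into the Volterra equation \eqref{2.13}.

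More concretely, \eqref{2.13} forces $(M_n)_{jl}(x,t;k)\to\delta_{jl}$ as $x\to-\infty$ whenever $\infty^n_{jl}=-\infty$, and as $x\to+\infty$ whenever $\infty^n_{jl}=+\infty$; meanwhile \eqref{2.5}--\eqref{2.6} give $\mu_1\to I$ at $-\infty$ and $\mu_2\to I$ at $+\infty$. Combining these with the two representations of $M_n$, the diagonal oscillating factors cancel entrywise and I obtain the scalar identities $(s^{-1}S_n)_{jl}=\delta_{jl}$ at every index with $\infty^n_{jl}=-\infty$, and $(S_n)_{jl}=\delta_{jl}$ at every index with $\infty^n_{jl}=+\infty$. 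Reading off \eqref{2.15}, for $n=1$ this forces $(S_1)_{13}=(S_1)_{23}=0$ and $(s^{-1}S_1)_{jl}=\delta_{jl}$ for the other seven entries; the first two columns of $S_1$ then coincide with the first two columns of $s$, and the third column reduces to $(S_1)_{33}=1/(s^{-1})_{33}(k)$. For $n=2$ I get $(S_2)_{31}=(S_2)_{32}=0$, the third column of $S_2$ equals the third column of $s$, and the upper $2\times 2$ block is obtained by solving $\bigl((s^{-1})_{jl}\bigr)_{j,l=1,2}(S_2)_{\cdot,l}=e_l$ for $l=1,2$.

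To cast these cofactor expressions into the compact form displayed in \eqref{2.18}, I would use two ingredients. First, the Jacobi (Sylvester) identity together with $\det s(k)=1$ identifies the above $2\times 2$ minor determinant $(s^{-1})_{11}(s^{-1})_{22}-(s^{-1})_{12}(s^{-1})_{21}$ as $s_{33}(k)$, which supplies the denominator. Second, the Hermitian symmetry in \eqref{2.12} gives $(s^{-1})_{jl}(k)=\overline{s_{lj}(\bar k)}$, which I would abbreviate as $\bar s_{lj}$ in accordance with the convention used in \eqref{2.18}. In particular $(s^{-1})_{33}=\bar s_{33}$ yields $(S_1)_{33}=1/\bar s_{33}$, and the $2\times 2$ inversion for $S_2$ produces exactly the entries $\bar s_{22}/s_{33}$, $-\bar s_{21}/s_{33}$, $-\bar s_{12}/s_{33}$, $\bar s_{11}/s_{33}$ in the claimed positions.

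I do not expect any genuine difficulty beyond careful bookkeeping; the one step that is not purely mechanical is the Jacobi-identity evaluation of the $2\times 2$ block of $s^{-1}$, which turns what would otherwise be a messy cofactor ratio into the clean denominator $s_{33}$. The rest is a direct translation of cofactor expressions through $s^\dagger(\bar k)=s^{-1}(k)$ into the conjugated entries $\bar s_{jl}$ appearing in the statement.
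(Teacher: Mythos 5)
Your proposal is correct and follows essentially the same route as the paper: derive the entrywise identities $(s^{-1}S_n)_{jl}=\delta_{jl}$ for $\infty^n_{jl}=-\infty$ and $(S_n)_{jl}=\delta_{jl}$ for $\infty^n_{jl}=+\infty$ from the two representations of $M_n$ and the normalizations built into \eqref{2.13}, then solve the resulting algebraic system using $s^\dag(\bar k)=s^{-1}(k)$ and $\det s=1$. The paper compresses the final computation into one sentence, whereas you make explicit the Jacobi-identity step identifying the $2\times2$ minor of $s^{-1}$ with $s_{33}$; that is exactly the bookkeeping the paper leaves implicit.
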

\begin{proof}
For $n=1,2,$ we can deduce from equation \eqref{2.17} that
\bea
S_n(k)&=&\lim_{x\rightarrow+\infty}\e^{\ii kx\hat{\sigma}}M_n(x,0;k),\nn\\
S_n(k)&=&\e^{\ii kx\hat{\sigma}}[\mu_2^{-1}(x,0;k)M_n(x,0;k)].\nn
\eea
Thus, \eqref{2.9}, integral equation \eqref{2.13} together with the definition \eqref{2.15} of $\infty^n_{jl}$ imply that
\bea
(s^{-1}S_n)_{jl}(k)=\delta_{jl},&&~~ \mbox{if}~\infty^n_{jl}=-\infty,\label{2.19}
\eea
\bea
(S_n)_{jl}(k)=\delta_{jl},&&\quad\quad~ \mbox{if}~\infty^n_{jl}=+\infty.\label{2.20}
\eea
Computing the explicit solution of the algebraic system \eqref{2.19}-\eqref{2.20} and using the relation \eqref{2.12}, we find that $\{S_n(k)\}^2_1$ are exactly given by \eqref{2.18}.
\end{proof}

For convenience, we assume that $s_{33}$ has no zero in $\bfC_-$. Then, we have the following main result in this section.
\begin{theorem}
Let $u_0(x)$ be a function in the Schwartz space $S(\bfR)$ and define the matrix-valued spectral function $s(k)$ via \eqref{2.9}. Define $M(x,t;k)$ as the solution of the following matrix RH problem:

$\bullet$ $M(x,t;k)=\left\{
\begin{aligned}
&M_+(x,t;k)=M_1(x,t;k),\quad k\in\bfC_+=D_1,\\
&M_-(x,t;k)=M_2(x,t;k),\quad k\in\bfC_-=D_2,
\end{aligned}
\right.$ is a sectionally meromorphic function in $\bfC\setminus\bfR$;

$\bullet$ $M(x,t;k)$ satisfies the jump condition
\be\label{2.21}
M_+(x,t;k)=M_-(x,t;k)J(x,t,k),~~k\in\bfR,
\ee
where the matrix $J(x,t,k)$ is defined by
\bea\label{2.22}
\begin{aligned}
J(x,t,k)&=\e^{-\ii(kx-4k^3t)\hat{\sigma}}S^{-1}_2(k)S_1(k)\\
&=\e^{-\ii(kx-4k^3t)\hat{\sigma}}{\left( \begin{array}{ccc}
1 ~& 0 ~& \frac{\bar{s}_{31}}{\bar{s}_{33}} \\[4pt]
0 ~& 1 ~& \frac{\bar{s}_{32}}{\bar{s}_{33}}\\[4pt]
\frac{s_{31}}{s_{33}} ~& \frac{s_{32}}{s_{33}} ~& 1+\frac{\bar{s}_{31}}{\bar{s}_{33}}\frac{s_{31}}{s_{33}}+\frac{\bar{s}_{32}}{\bar{s}_{33}}\frac{s_{32}}{s_{33}}
\end{array}
\right)};
\end{aligned}
\eea

$\bullet$  $M(x,t;k)$ has the following asymptotics:
\be\label{2.23}
M(x,t;k)=I+O\bigg(\frac{1}{k}\bigg),~k\rightarrow\infty,
\ee

Then $M(x,t;k)$ exists and is unique.

Define $\{u(x,t),\bar{u}(x,t)\}$ in terms of $M(x,t;k)$ by
\bea
\bar{u}(x,t)&=&2\ii\lim_{k\rightarrow\infty}(k M(x,t;k))_{31},\label{2.24}\\
u(x,t)&=&2\ii\lim_{k\rightarrow\infty}(k M(x,t;k))_{32}.\label{2.25}
\eea
Then $u(x,t)$ solves the Sasa--Satsuma equation \eqref{1.1}. Furthermore, $u(x,0)=u_0(x).$
\end{theorem}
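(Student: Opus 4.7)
The plan is to proceed in the standard dressing-method order: first establish existence and uniqueness of $M$, then extract $u$ via the large-$k$ asymptotics of the Lax pair, then verify that $u$ solves \eqref{1.1} and matches the initial datum at $t=0$.

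For existence and uniqueness, the idea is to apply a vanishing lemma argument of Zhou type. The jump matrix \eqref{2.22} has a natural $2\times2$ vs.\ scalar block structure: the $(3,1),(3,2),(1,3),(2,3)$ entries are conjugate/transposed versions of each other, and combined with the symmetry \eqref{2.12} one checks that $J(k)$ admits the Hermitian decomposition $J(k) = (I - W_-(k))^{-1}(I + W_+(k))$ with $W_+(k) = W_-^\dagger(k)$ on $\bfR$. Together with the normalization \eqref{2.23} and the assumption that $s_{33}$ has no zeros in $\bfC_-$ (so there are no solitons and no discrete spectrum), the standard Zhou argument gives uniqueness, and the Fredholm alternative then gives existence. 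The decomposition of $\mu_1,\mu_2$ into the analytic blocks $M_1,M_2$ from Proposition~\ref{pro2.1} provides the concrete solution, so existence in fact comes for free once one has shown the eigenfunctions are well defined under the Schwartz assumption on $u_0$.

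For the reconstruction formulas, I would insert the expansion
\be
M(x,t;k) = I + \frac{M^{(1)}(x,t)}{k} + \frac{M^{(2)}(x,t)}{k^2} + O(k^{-3}),\qquad k\to\infty,\nn
\ee
into the $x$-part of \eqref{2.4}. Matching the $O(k^0)$ coefficient yields $\ii[\sigma,M^{(1)}] = U$, and since the nonzero off-diagonal entries of $\sigma$-commutator with row-3 pick up a factor $-2$, one recovers
\berr
\bar u(x,t) = 2\ii\, M^{(1)}_{31}(x,t),\qquad u(x,t) = 2\ii\, M^{(1)}_{32}(x,t),
\eerr
which is exactly \eqref{2.24}--\eqref{2.25}. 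That $u$ solves the SS equation \eqref{1.1} is then a consequence of the dressing principle: because the jump $J(x,t,k)$ depends on $(x,t)$ only through the explicit exponential factor $\e^{-\ii(kx-4k^3t)\hat\sigma}$, the function $M$ satisfies both equations of the Lax pair \eqref{2.4} with $U,V$ reconstructed from the same $M^{(1)},M^{(2)}$; comparing the $O(k^{-1})$ terms in the compatibility condition of \eqref{2.4} then produces \eqref{1.1}. Concretely, the $(3,1)$ and $(3,2)$ entries of the compatibility relation reduce, after using $\ii[\sigma,M^{(1)}]=U$ and the expression for $V$ in terms of $M^{(1)},M^{(2)}$, to the coupled system equivalent to \eqref{1.1} for $u$ and $\bar u$.

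Finally, for the initial condition, at $t=0$ the jump $J(x,0,k)$ is exactly the jump satisfied by the piecewise-analytic function built from $\mu_1(\cdot,0;k)$ and $\mu_2(\cdot,0;k)$ via \eqref{2.8} and Proposition~\ref{pro2.2}; by uniqueness of the RH problem, $M(x,0;k)$ coincides with this function, and then $2\ii\,M^{(1)}_{32}(x,0) = u_0(x)$ from the known asymptotics of $\mu_j(x,0;k)$ driven by $U(x,0)$. The main obstacle will be the vanishing lemma step: writing down the Hermitian structure of $J$ in a form where Zhou's argument applies in the $3\times3$ setting with a $2\times2$ block on top, and tracking the symmetries \eqref{2.11}--\eqref{2.12} carefully so that the auxiliary function $M(k)^\dagger M(\bar k)$ has the right analyticity and decay to force it to vanish identically.
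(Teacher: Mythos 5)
Your proposal follows essentially the same route as the paper: existence and uniqueness via a vanishing-lemma argument resting on the positive-definite (Hermitian-factorizable) structure of the jump matrix \eqref{2.22} on $\bfR$, reconstruction of $u$ from the $O(1/k)$ term by matching $\ii[\sigma,M^{(1)}]=U$ in the Lax pair, the dressing-method argument for why $u$ solves \eqref{1.1}, and uniqueness of the RH problem at $t=0$ to recover $u(x,0)=u_0(x)$. The paper's own proof is just a terser version of the same outline, so no substantive comparison is needed.
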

\begin{proof}
The existence and uniqueness for the solution of above RH problem is a consequence of a `vanishing lemma' for the associated RH problem with the vanishing condition at infinity $M(k)=O(1/k)$, $k\rightarrow\infty$. This fact holds due to the jump matrix $J(x,t,k)$ in \eqref{2.22} is positive definite \cite{MJA}. The proof of a function expressed in terms of the solution of a RH problem with specific dependence on the external parameters $(x,t)$ solves certain nonlinear PDE possessing a Lair pair representation, follows from standard arguments using the dressing method \cite{VEZ1,VEZ2} (see also \cite{XG,JX2}) that if $M$ solves the above RH problem and $u(x,t)$ is defined by \eqref{2.25}, then $u(x,t)$ solves SS equation \eqref{1.1}.

In order to verify the initial condition, one observes that for $t=0$, the RH problem reduces to that associated with $u_0(x)$, which yields $u(x,0)=u_{0}(x)$, owing to the uniqueness of the solution of the RH problem.
\end{proof}

If we rewrite a $3\times3$ matrix $A$ as a block form
$$A=\begin{pmatrix}
A_{11}~& A_{12}\\
A_{21}~& A_{22}\\
\end{pmatrix},$$ where $A_{22}$ is scalar. Then, the above RH problem can be rewritten as the following form:
\be\label{2.26}
\left\{
\begin{aligned}
&M_+(x,t;k)=M_-(x,t;k)J(x,t,k),~~k\in\bfR,\\
&M(x,t;k)\rightarrow I,\qquad\qquad\qquad\qquad\quad k\rightarrow\infty,
\end{aligned}
\right.
\ee
where
\be\label{2.27}
\begin{aligned}
J(x,t,k)&=\begin{pmatrix}
I~& \rho^\dag(\bar{k})\e^{-t\Phi}\\[4pt]
\rho(k)\e^{t\Phi}~& 1+\rho(k)\rho^\dag(\bar{k})
\end{pmatrix},\\
\Phi(\zeta,k)&=2\ii\zeta k-8\ii k^3,~\zeta=\frac{x}{t},~\rho(k)=\bigg(\frac{s_{31}}{s_{33}}~ \frac{s_{32}}{s_{33}}\bigg).
\end{aligned}
\ee
According to the symmetry relation \eqref{2.12}, we can conclude that
\be\label{2.28}
\rho(k)=\overline{\rho(-\bar{k})}\begin{pmatrix}
0 ~& 1\\
1~ & 0
\end{pmatrix}.
\ee
Moreover, we have
\bea
\begin{pmatrix}
\bar{u}(x,t)~& u(x,t)
\end{pmatrix}=2\ii\lim_{k\rightarrow\infty}(k M(x,t;k))_{21}.\label{2.29}
\eea
In the following long-time asymptotic analysis, we will focus on the new block RH problem \eqref{2.26} with the jump matrix defined by \eqref{2.27}.

\section{Long-time asymptotic analysis}
\setcounter{equation}{0}
\setcounter{lemma}{0}
\setcounter{theorem}{0}
An analogue of the classical steepest descent method for RH problems was invented by Deift and Zhou \cite{PD}, we consider the stationary points of the function $\Phi$, that is, taking
$$\frac{\dd\Phi(\zeta,k)}{\dd k}=0,$$
the stationary phase points are obtained for $x>0$ as
\be\label{3.1}
\pm k_0=\pm\sqrt{\frac{\zeta}{12}},
\ee
and the signature table for Re$\Phi(\zeta,k)$ is shown in Fig. \ref{fig1}. Let $M>1$ be a given constant, and let $\mathcal{I}$ denote the interval $\mathcal{I}=(0,M]$. We restrict our attention here to the physically interesting region $\zeta\in\mathcal{I}$.

\begin{figure}[htbp]
  \centering
  \includegraphics[width=3.5in]{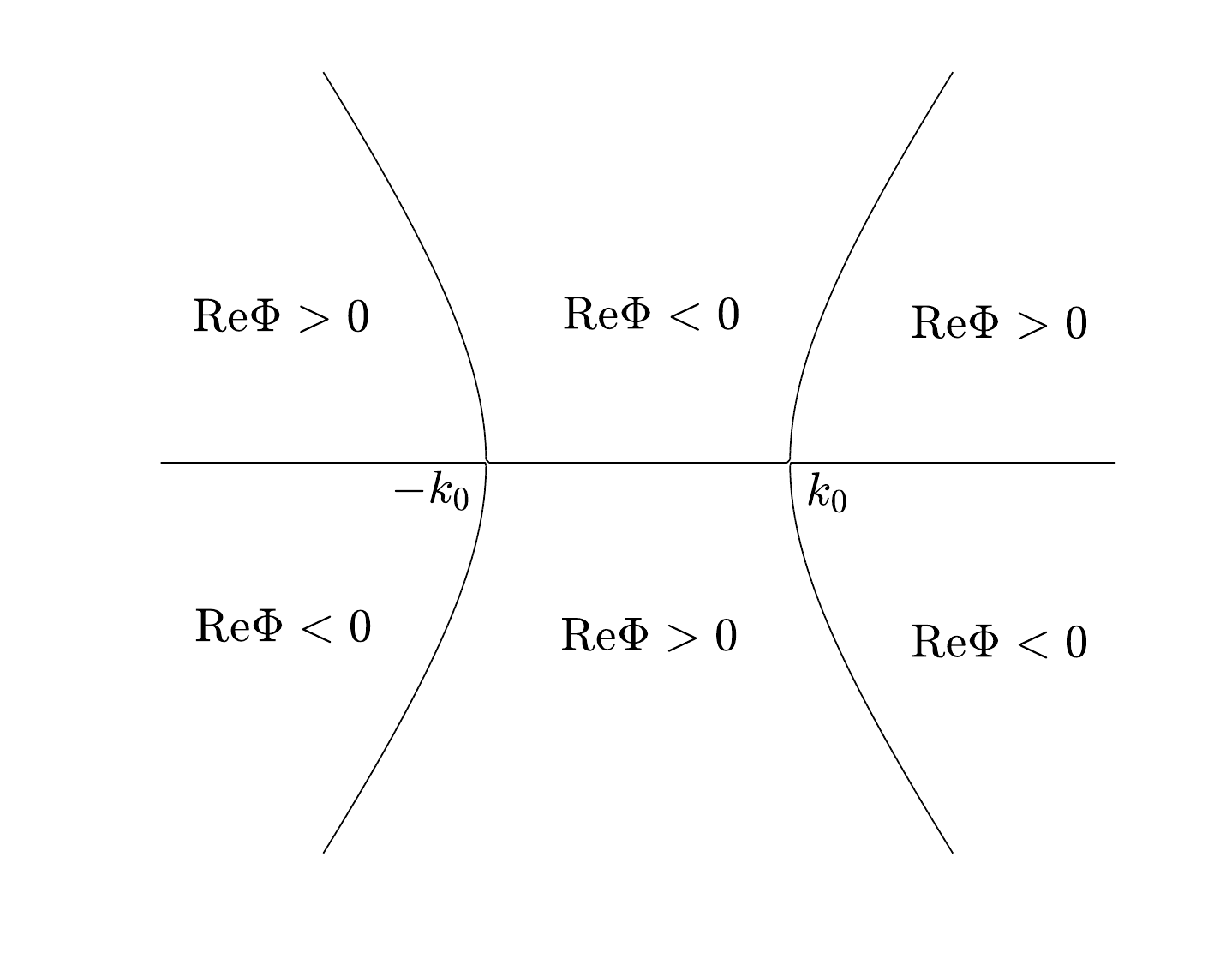}
  \caption{The signature table for Re$\Phi(\zeta,k)$ in the complex $k$-plane.}\label{fig1}
\end{figure}
\subsection{Transformations of the original RH problem}
In this section, we aim to transform the associated original RH problem \eqref{2.26} to a solvable RH problem.
We note that the jump matrix enjoys two distinct factorizations:
\bea\label{3.2}
J=\left\{
\begin{aligned}
&\left( \begin{array}{cc}
I ~& 0\\[4pt]
\rho\e^{t\Phi} ~& 1 \\
\end{array}
\right )
\left( \begin{array}{ccc}
I ~& \rho^\dag\e^{-t\Phi} \\[4pt]
0 ~& 1
\end{array}
\right ),\\
&\left( \begin{array}{cc}
I ~& \frac{\rho^\dag\e^{-t\Phi}}{1+\rho\rho^\dag}\\[4pt]
0 & 1 \\
\end{array}
\right )
\left( \begin{array}{cc}
(I+\rho^\dag\rho)^{-1} ~& 0\\[4pt]
0 ~& 1+\rho\rho^\dag\\
\end{array}
\right )\left( \begin{array}{cc}
I ~& 0\\[4pt]
\frac{\rho\e^{t\Phi}}{1+\rho\rho^\dag} ~& 1 \\
\end{array}
\right ).
\end{aligned}
\right.
\eea
We introduce a function $\delta(k)$ as the solution of the matrix RH problem
\be\label{3.3}
\left\{
\begin{aligned}
\delta_+(k)&=(I+\rho^\dag(k)\rho(k))\delta_-(k),~~|k|<k_0,\\
&=\delta_-(k),\qquad\qquad\qquad~~~~|k|>k_0,\\
\delta(k)&\rightarrow I,\qquad\qquad\qquad\qquad\quad k\rightarrow\infty.
\end{aligned}
\right.
\ee
Since the jump matrix $I+\rho^\dag(k)\rho(k)$ is positive definite, the vanishing lemma \cite{MJA} yields the existence and uniqueness of the function $\delta(k)$. Furthermore, we have
\be\label{3.4}
\left\{
\begin{aligned}
\det\delta_+(k)&=(1+\rho(k)\rho^\dag(k))\det\delta_-(k),~~|k|<k_0,\\
&=\det\delta_-(k),\qquad\quad~\qquad\qquad|k|>k_0,\\
\det\delta(k)&\rightarrow 1,\qquad\qquad\qquad\qquad\qquad~~ k\rightarrow\infty.
\end{aligned}
\right.
\ee
By Plemelj formula, we can obtain
\be\label{3.5}
\det\delta(k)=\exp\bigg\{\frac{1}{2\pi\ii}\int_{-k_0}^{k_0}\frac{\ln(1+\rho(\xi)\rho^\dag(\xi))}{\xi-k}\dd\xi\bigg\}
=\bigg(\frac{k-k_0}{k+k_0}\bigg)^{-\ii\nu}\e^{\chi(k)},
\ee
where
\bea
\nu&=&\frac{1}{2\pi}\ln(1+\rho(k_0)\rho^\dag(k_0))>0,\label{3.6}\\
\chi(k)&=&\frac{1}{2\pi\ii}\int_{-k_0}^{k_0}\ln\bigg(\frac{1+\rho(\xi)\rho^\dag(\xi)}
{1+\rho(k_0)\rho^\dag(k_0)}\bigg)\frac{\dd\xi}{\xi-k}.\label{3.7}
\eea
Here we have used the relation
\be\label{3.8}
1+\rho(k_0)\rho^\dag(k_0)=1+\rho(-k_0)\rho^\dag(-k_0),
\ee
which follows from the second symmetry relation in \eqref{2.12}, more precisely,
$$s_{31}(-k)=\bar{s}_{32}(\bar{k}),~s_{32}(-k)=\bar{s}_{31}(\bar{k}),~s_{33}(-k)=\bar{s}_{33}(\bar{k}).$$

On the other hand, for $|k|<k_0$, it follows from \eqref{3.3} that
\be\label{3.9}
\lim_{\epsilon\rightarrow0^+}\delta(k-\ii\epsilon)=(I+\rho^\dag(k)\rho(k))^{-1}\lim_{\epsilon\rightarrow0^+}\delta(k+\ii\epsilon).
\ee
If we let $g(k)=(\delta^\dag(\bar{k}))^{-1}$, then we get
\bea
g_+(k)&=&\lim_{\epsilon\rightarrow0^+}g(k+\ii\epsilon)=\lim_{\epsilon\rightarrow0^+}(\delta^\dag(\bar{k}-\ii\epsilon))^{-1}\nn\\
&=&(I+\rho^\dag(k)\rho(k))\lim_{\epsilon\rightarrow0^+}(\delta^\dag(\bar{k}+\ii\epsilon))^{-1}\nn\\
&=&(I+\rho^\dag(k)\rho(k))g_-(k).\nn
\eea
Therefore, we find
\be\label{3.10}
(\delta^\dag(\bar{k}))^{-1}=\delta(k).
\ee
Direct calculation as in \cite{XG} and the maximum principle show that
\be\label{3.11}
|\delta(k)|\leq\text{const}<\infty,\quad |\det\delta(k)|\leq\text{const}<\infty,
\ee
for all $k$, where we define $|A|=(\text{tr}A^\dag A)^{1/2}$ for any matrix $A$.

Define
\be\label{3.12}
\Delta(k)=\begin{pmatrix}
\delta^{-1}(k)~& 0\\[4pt]
0 ~& \det\delta(k)
\end{pmatrix}.
\ee
Introduce
$$M^{(1)}(x,t;k)=M(x,t;k)\Delta^{-1}(k),$$ and reverse the orientation for $|k|<k_0$ as shown in Fig. \ref{fig2},
then $M^{(1)}$ satisfies the following RH problem
\be\label{3.13}
\left\{
\begin{aligned}
&M^{(1)}_+(x,t;k)=M^{(1)}_-(x,t;k)J^{(1)}(x,t,k),~k\in \bfR,\\
&M^{(1)}(x,t;k)\rightarrow I,\qquad\qquad\qquad\qquad~~~~~ k\rightarrow\infty,
\end{aligned}
\right.
\ee
where
\bea\label{3.14}
J^{(1)}&=&\Delta_-J\Delta_+^{-1}\nn\\
&=&\left\{
\begin{aligned}
&\left( \begin{array}{cc}
I ~& 0\\[4pt]
\rho_1\delta\det\delta\e^{t\Phi} ~& 1\\
\end{array}
\right )
\left( \begin{array}{cc}
I ~& \frac{\delta^{-1}\rho_4\e^{-t\Phi}}{\det\delta} \\[4pt]
0 ~& 1 \\
\end{array}
\right ),~\qquad |k|>k_0,\\
&\left( \begin{array}{cc}
I ~& 0\\[4pt]
\rho_3\delta_-\det\delta_-\e^{t\Phi} ~& 1 \\
\end{array}
\right )\left( \begin{array}{cc}
I ~& \frac{\delta_+^{-1}\rho_2\e^{-t\Phi}}{\det\delta_+} \\[4pt]
0 ~& 1 \\
\end{array}
\right ),\quad~|k|<k_0,
\end{aligned}
\right.
\eea
and the functions $\{\rho_j(k)\}_1^4$ are defined by
\be\label{3.15}
\begin{aligned}
\rho_1(k)&=\rho(k),\qquad\qquad\qquad\rho_2(k)=-\frac{\rho^\dag(\bar{k})}{1+\rho(k)\rho^\dag(\bar{k})},\\
\rho_3(k)&=-\frac{\rho(k)}{1+\rho(k)\rho^\dag(\bar{k})},\quad \rho_4(k)=\rho^\dag(\bar{k}).
\end{aligned}
\ee

\begin{figure}[htbp]
  \centering
  \includegraphics[width=3in]{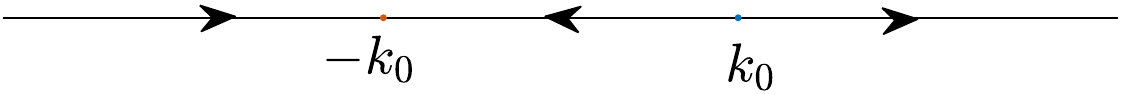}
  \caption{The oriented contour on $\bfR$.}\label{fig2}
\end{figure}

Our next goal is to deform the contour. However, the spectral functions $\{\rho_j(k)\}_1^4$ have limited domains of analyticity,
we will follow the idea of \cite{PD,JL3} and decompose each of the functions $\{\rho_j(k)\}_1^4$ into an analytic part and a small remainder. The analytic part of the jump matrix will be deformed, whereas the small remainder will be left on the original contour. We introduce the open subsets $\{\Omega_j\}_1^4$, as displayed in Fig. \ref{fig3} such that
\bea
\Omega_1\cup\Omega_3&=&\{k|\text{Re}\Phi(\zeta,k)<0\},\nn\\
\Omega_2\cup\Omega_4&=&\{k|\text{Re}\Phi(\zeta,k)>0\}.\nn
\eea
\begin{figure}[htbp]
  \centering
  \includegraphics[width=3.5in]{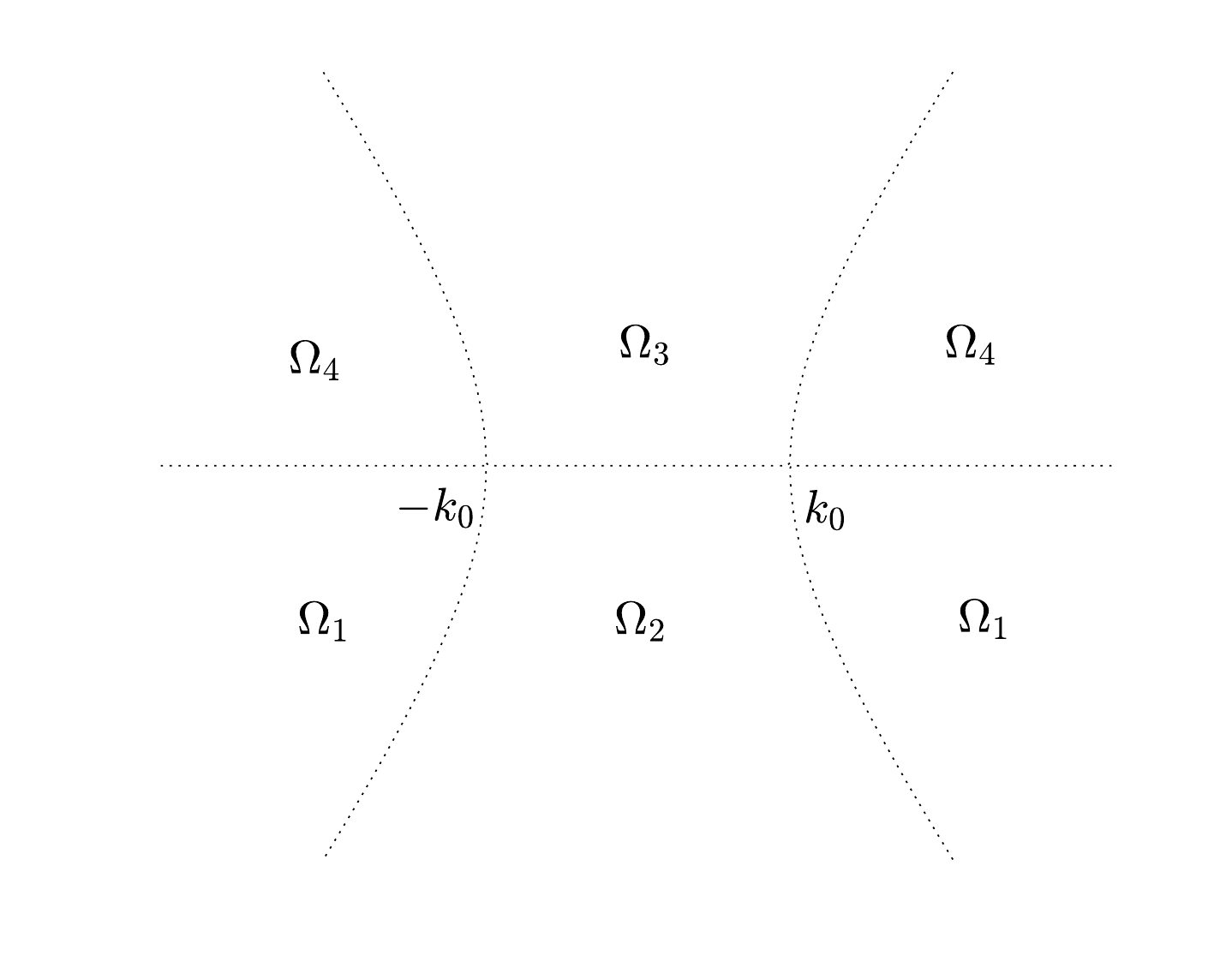}
  \caption{The open sets $\{\Omega_j\}_1^4$ in the complex $k$-plane.}\label{fig3}
\end{figure}

In fact, we have the following lemma.
\begin{lemma}\label{lem3.1}
There exist decompositions
\be\label{3.16}
\rho_j(k)=\left\{
\begin{aligned}
&\rho_{j,a}(x,t,k)+\rho_{j,r}(x,t,k),\quad |k|>k_0,~k\in\bfR,~j=1,4,\\
&\rho_{j,a}(x,t,k)+\rho_{j,r}(x,t,k),\quad |k|<k_0,~k\in\bfR,~j=2,3,
\end{aligned}
\right.
\ee
where the functions $\{\rho_{j,a}, \rho_{j,r}\}^4_1$ have the following properties:

(1) For each $\zeta\in\mathcal{I}$ and each $t>0$, $\rho_{j,a}(x,t,k)$ is defined and continuous for $k\in\bar{\Omega}_j$ and
analytic for $\Omega_j$, $j= 1,2$.

(2) For $K>0$, the functions $\rho_{j,a}$ for $j=1,2,3,4$ satisfy
\be\label{3.17}
|\rho_{j,a}(x,t,k)-\rho_j(k_0)|\leq C_K|k-k_0|\e^{\frac{t}{4}|\text{Re}\Phi(\zeta,k)|},~k\in\bar{\Omega}_j,
~|k|<K,~ t>0,~\zeta\in\mathcal{I}.
\ee
In particular, the functions $\rho_{1,a}$ and $\rho_{4,a}$ satisfy
\be\label{3.18}
|\rho_{j,a}(x,t,k)|\leq \frac{C}{1+|k|^2}\e^{\frac{t}{4}|\text{Re}\Phi(\zeta,k)|},
~t>0,~k\in\bar{\Omega}_j,~\zeta\in\mathcal{I},~j=1,4,
\ee
where the constants $C_K,C$ are independent of $\zeta, k, t$, but $C_K$ may depend on $K$.

(3) The $L^1, L^2$ and $L^\infty$ norms of the functions $\rho_{1,r}(x,t,\cdot)$ and $\rho_{4,r}(x,t,\cdot)$ on $(-\infty,-k_0)\cup(k_0,\infty)$ are $O(t^{-3/2})$ as $t\rightarrow\infty$ uniformly with respect to $\zeta\in\mathcal{I}$.

(4) The $L^1, L^2$ and $L^\infty$ norms of the functions $\rho_{2,r}(x,t,\cdot)$ and $\rho_{3,r}(x,t,\cdot)$ on $(-k_0,k_0)$ are $O(t^{-3/2})$ as $t\rightarrow\infty$ uniformly with respect to $\zeta\in\mathcal{I}$.

(5) For $j=1,2,3,4$, the following symmetries hold:
\be\label{3.19}
\rho_{j,a}(x,t,k)=\overline{\rho_{j,a}(x,t,-\bar{k})}\begin{pmatrix}
0~& 1\\
1~& 0
\end{pmatrix},~\rho_{j,r}(x,t,k)=\overline{\rho_{j,r}(x,t,-\bar{k})}\begin{pmatrix}
0~& 1\\
1~& 0
\end{pmatrix}.
\ee
\end{lemma}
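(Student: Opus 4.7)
The strategy is to follow the decomposition scheme of \cite{PD,JL3,AL}, with the additional bookkeeping required by the presence of two stationary points $\pm k_0$ and the symmetry \eqref{2.28}. Since $u_0\in S(\bfR)$, the entries of $s(k)$ are smooth and rapidly decaying on $\bfR$; under the assumption that $s_{33}$ has no real zeros, the same is true of $\rho(k)=(s_{31}/s_{33},\,s_{32}/s_{33})$. The denominator $1+\rho(k)\rho^\dag(\bar{k})=1+\rho(k)\rho(k)^\dag$ that enters $\rho_2,\rho_3$ is bounded below by $1$, so smoothness and rapid decay are inherited by all four functions $\{\rho_j\}_1^4$ on their natural domains.

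First I would carry out the decomposition near the single stationary point $k_0$. Fix $j\in\{1,2,3,4\}$ and a large integer $N$ to be fixed at the end in terms of the target decay $t^{-3/2}$. Expand $\rho_j$ in a Taylor polynomial at $k_0$ to order $N$, and replace this polynomial by a rational function $R_j(k)$ that agrees with it to order $N$ at $k_0$ and, for $j=1,4$, decays like $(1+|k|^2)^{-1}$ at infinity (the rational weight is unnecessary for $j=2,3$, which are only needed on the bounded interval $[-k_0,k_0]$). Then $f_j(k):=\rho_j(k)-R_j(k)$ is smooth on the relevant portion of $\bfR$, vanishes to order $N+1$ at $k_0$, and decays rapidly at infinity. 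Writing $f_j(k)=(k-k_0)^{N+1}g_j(k)$ and representing $g_j$ by its Fourier integral, one changes variables so that the relevant factor $\e^{\pm t\Phi(\zeta,k)/4}$ is absorbed into the Fourier phase near $k_0$; splitting the resulting integral at the origin gives two pieces, one extending holomorphically into the upper half $k$-plane and the other into the lower. Together with $R_j$, the piece that extends into $\Om_j$ defines $\rho_{j,a}$, while the other piece restricted to $\bfR$ defines $\rho_{j,r}$.

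The required estimates then follow from the construction. The exponential factor inherited from the Fourier phase gives the bound $\e^{\frac{t}{4}|\text{Re}\Phi|}$ in \eqref{3.17}--\eqref{3.18}; the factor $(k-k_0)^{N+1}$ produces the local $|k-k_0|$ prefactor in \eqref{3.17}; and for $j=1,4$, the rational weight in $R_j$ supplies the $(1+|k|^2)^{-1}$ decay required in \eqref{3.18}. Repeated integration by parts in the Fourier integral defining $\rho_{j,r}$ produces arbitrary polynomial decay in $t$ (since $g_j\in C^\infty$ with enough derivatives in $L^1\cap L^2$), and taking $N$ sufficiently large yields the claimed $O(t^{-3/2})$ bound simultaneously in the $L^1$, $L^2$, and $L^\infty$ norms, giving (3) and (4). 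Property (1) is immediate from the construction and the continuity of $\rho_j$ up to the boundary.

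Finally, to secure the symmetry in (5), I would not construct the decomposition independently near the second stationary point $-k_0$. Instead I would restrict the construction above to a neighborhood of $k_0$ via a smooth cutoff and \emph{define} the decomposition on the mirror region near $-k_0$ by the formulas \eqref{3.19}. The identities $\overline{\Phi(\zeta,-\bar k)}=-\Phi(\zeta,k)$ and \eqref{3.8} ensure that the exponential and algebraic bounds transport correctly under this reflection, so (1)--(4) remain valid globally while (5) holds by construction. The main technical obstacle is precisely this compatibility: in contrast to the single-critical-point case of \cite{XG}, one must align the cutoffs near $\pm k_0$, the Fourier half-plane splits, and the mirror construction so that $\Om_j$-analyticity, the uniform bounds, and the symmetry are all simultaneously preserved. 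This is the reason for building the decomposition only in a neighborhood of $k_0$ and then enforcing the symmetry by reflection, rather than attempting two parallel constructions that would have to be matched a posteriori.
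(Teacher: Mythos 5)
Your proposal follows essentially the same route as the paper's proof: a rational approximant matching the Taylor expansion of $\rho_j$ at $k_0$ to high order (the paper takes $f_0(k)=\sum_{j=4}^{10}a_j(k-\ii)^{-j}$ matching to order six), a Fourier splitting along the phase variable $\phi=-\ii\Phi$ with a $t$-dependent cut at $s=-t/4$ that yields the analytic part bounded by $\e^{\frac{t}{4}|\text{Re}\,\Phi|}$ on $\bar{\Omega}_j$ and an $O(t^{-3/2})$ remainder from the Fourier tail via $\|s^2\hat{F}\|_{L^2}<\infty$, and an extension to the mirror region near $-k_0$ by the symmetry \eqref{3.19}, with $\rho_2,\rho_4$ obtained from $\rho_3,\rho_1$ by Schwartz conjugation. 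The differences (leaving the matching order $N$ free rather than fixing it, and phrasing the cut as a shift of the Fourier variable) are cosmetic.
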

\begin{proof}
We first consider the decomposition of $\rho_1(k)$. Denote $\Om_1=\Om_1^+\cup\Om_1^-$, where $\Om_1^+$ and $\Om_1^-$ denote the parts of $\Om_1$ in the right and left half-planes, respectively. We derive a decomposition of $\rho_1(k)$ in $\Om_1^+$, and then extend it to $\Om_1^-$ by symmetry. Since $\rho_1(k)\in C^\infty(\bfR)$, then for $n=0,1,2$,
\bea
\rho_1^{(n)}(k)&=&\frac{\dd^n}{\dd k^n}\bigg(\sum_{j=0}^6\frac{\rho_1^{(j)}(k_0)}{j!}(k-k_0)^j\bigg)+O((k-k_0)^{7-n}),~~ k\rightarrow k_0,\label{3.20}
\eea
Let
\be\label{3.21}
f_0(k)=\sum_{j=4}^{10}\frac{a_j}{(k-\ii)^j},
\ee
where $\{a_j\}_4^{10}$ are complex constants such that
\be\label{3.22}
f_0(k)=\sum_{j=0}^6\frac{\rho_1^{(j)}(k_0)}{j!}(k-k_0)^j+O((k-k_0)^7),~~k\rightarrow k_0.
\ee
It is easy to verify that \eqref{3.22} imposes seven linearly independent conditions on the $a_j$, hence the coefficients $a_j$ exist and are unique. Letting $f=\rho_1-f_0$, it follows that \\
(i) $f_0(k)$ is a rational function of $k\in\bfC$ with no poles in $\Om_1^+$;\\
(ii) $f_0(k)$ coincides with $\rho_1(k)$ to six order at $k_0$, more precisely,
\be\label{3.23}
\frac{\dd^n}{\dd k^n}f(k)=\left\{
\begin{aligned}
&O((k-k_0)^{7-n}),~ k\rightarrow k_0,\\
&O(k^{-4-n}),~~~~~\quad k\rightarrow\infty,
\end{aligned}
\quad k\in\bfR,~n=0,1,2.
\right.
\ee
The decomposition of $\rho_1(k)$ can be derived as follows. The map $k\mapsto\phi=\phi(k)$ defined by $\phi(k)=-\ii\Phi(\zeta,k)$ is a bijection $[k_0,\infty)\mapsto(-\infty,16k_0^3]$, so we may define a function $F$ by
\be\label{3.24}
F(\phi)=\left\{
\begin{aligned}
&\frac{(k-\ii)^3}{k-k_0}f(k),~\phi<16k_0^3,\\
&0,\qquad\qquad\quad \phi\geq16k_0^3.
\end{aligned}
\right.
\ee
Then,
\berr
F^{(n)}(\phi)=\bigg(\frac{1}{24(k^2_0-k^2)}\frac{\partial}{\partial k}\bigg)^n\bigg(\frac{(k-\ii)^3}{k-k_0}f(k)\bigg),~~\phi<16k_0^3.
\eerr
By \eqref{3.23}, $F\in C^1(\bfR)$ and $F^{(n)}(\phi)=O(|\phi|^{-2/3})$ as $|\phi|\rightarrow\infty$ for $n=0,1,2$. In particular,
\be\label{3.25}
\bigg\|\frac{\dd^nF}{\dd\phi^n}\bigg\|_{L^2(\bfR)}<\infty,\quad n=0,1,2,
\ee
that is, $F^T$ belongs to $H^2(\bfR)\times H^2(\bfR)$, where the superscript $`T$' denote matrix transpose. By the Fourier transform $\hat{F}(s)$ defined by
\be\label{3.26}
\hat{F}(s)=\frac{1}{2\pi}\int_\bfR F(\phi)\e^{-\ii\phi s}\dd\phi
\ee
where
\be\label{3.27}
F(\phi)=\int_\bfR\hat{F}(s)\e^{\ii\phi s}\dd s,
\ee
it follows from Plancherel theorem that $\|s^2\hat{F}(s)\|_{L^2(\bfR)}<\infty$. Equations \eqref{3.24} and \eqref{3.27} imply
\be\label{3.28}
f(k)=\frac{k-k_0}{(k-\ii)^3}\int_\bfR\hat{F}(s)\e^{\ii\phi s}\dd s,\quad k>k_0.
\ee
Writing
$$f(k)=f_a(t,k)+f_r(t,k),\quad t>0,~k>k_0,$$
where the functions $f_a$ and $f_r$ are defined by
\bea
f_a(t,k)&=&\frac{k-k_0}{(k-\ii)^3}\int_{-\frac{t}{4}}^\infty\hat{F}(s)\e^{s\Phi(\zeta,k)}\dd s,~\quad t>0,~k\in\Om^+_1,\label{3.29}\\
f_r(t,k)&=&\frac{k-k_0}{(k-\ii)^3}\int^{-\frac{t}{4}}_{-\infty}\hat{F}(s)\e^{s\Phi(\zeta,k)}\dd s,\quad t>0,~k>k_0,\label{3.30}
\eea
we infer that $f_a(t,\cdot)$ is continuous in $\bar{\Om}^+_1$ and analytic in $\Om^+_1$. Moreover, we can get
\bea\label{3.31}
|f_a(t,k)|&\leq&\frac{|k-k_0|}{|k-\ii|^3}\|\hat{F}(s)\|_{L^1(\bfR)}\sup_{s\geq-\frac{t}{4}}\e^{s\text{Re}\Phi(\zeta,k)}\nn\\
&\leq&\frac{C|k-k_0|}{|k-\ii|^3}\e^{\frac{t}{4}|\text{Re}\Phi(\zeta,k)|},\quad t>0,~k\in\bar{\Om}^+_1,~\zeta\in\mathcal{I}.
\eea
Furthermore, we have
\bea\label{3.32}
|f_r(t,k)|&\leq&\frac{|k-k_0|}{|k-\ii|^3}\int_{-\infty}^{-\frac{t}{4}}s^2|\hat{F}(s)|s^{-2}\dd s\nn\\
&\leq&\frac{C}{1+|k|^2}\|s^2\hat{F}(s)\|_{L^2(\bfR)}\sqrt{\int_{-\infty}^{-\frac{t}{4}}s^{-4}\dd s},\\
&\leq&\frac{C}{1+|k|^2}t^{-3/2},\quad t>0,~k>k_0,~\zeta\in\mathcal{I}.\nn
\eea
Hence, the $L^1,L^2$ and $L^\infty$ norms of $f_r$ on $(k_0,\infty)$ are $O(t^{-3/2})$. Letting
\bea
\rho_{1,a}(t,k)&=&f_0(k)+f_a(t,k),~\quad t>0,~ k\in\bar{\Om}_1^+,\label{3.33}\\
\rho_{1,r}(t,k)&=&f_r(t,k),~~\qquad\qquad t>0,~ k>k_0.\label{3.34}
\eea
For $k<-k_0$, we use the symmetry \eqref{3.19} to extend this decomposition.  Thus, we find a decomposition of $\rho_1$ for $|k|>k_0$ with the properties listed in the statement of the lemma.

The decomposition of $\rho_3$ for the case $|k|<k_0$ is similar, detailed proof can be found in \cite{PD,JL3}, we will be omitted. The decompositions of $\rho_2$ and $\rho_4$ can be obtained from $\rho_3$ and $\rho_1$ by Schwartz conjugation.
\end{proof}

The next transformation is to deform the contour so that the jump matrix involves the exponential factor $\e^{-t\Phi}$ on the parts of the contour where Re$\Phi$ is positive and the factor $\e^{t\Phi}$ on the parts where Re$\Phi$ is negative. More precisely, we put
\be\label{3.35}
M^{(2)}(x,t;k)=M^{(1)}(x,t;k)G(k),
\ee
where
\be\label{3.36}
G(k)=\left\{
\begin{aligned}
&\begin{pmatrix}
I ~& 0\\[4pt]
\rho_{1,a}\delta\det\delta\e^{t\Phi} ~& 1
\end{pmatrix},~~~ k\in U_1,\\
&\begin{pmatrix}
I ~& -\frac{\delta^{-1}\rho_{2,a}\e^{-t\Phi}}{\det\delta}\\[4pt]
0 ~& 1
\end{pmatrix},~~\quad k\in U_2,\\
&\begin{pmatrix}
I ~& 0\\[4pt]
\rho_{3,a}\delta\det\delta\e^{t\Phi} ~& 1
\end{pmatrix},\quad~ k\in U_3,\\
&\begin{pmatrix}
I &~ -\frac{\delta^{-1}\rho_{4,a}\e^{-t\Phi}}{\det\delta}\\[4pt]
0 ~& 1
\end{pmatrix},\quad k\in U_4,\\
&I,~~~~~~~\qquad\qquad\qquad\quad k\in U_5\cup U_6.
\end{aligned}
\right.
\ee
Then the matrix $M^{(2)}(x,t;k)$ satisfies the following RH problem
\be\label{3.37}
M_+^{(2)}(x,t;k)=M_-^{(2)}(x,t;k)J^{(2)}(x,t,k),\quad k\in\Sigma,
\ee
with the jump matrix $J^{(2)}=G_-^{-1}(k)J^{(1)}G_+(k)$ is given by
\bea
J^{(2)}_1&=&\begin{pmatrix}
I ~& 0\\[4pt]
-\rho_{1,a}\delta\det\delta\e^{t\Phi} ~& 1
\end{pmatrix},\quad J^{(2)}_2=\begin{pmatrix}
I ~& -\frac{\delta^{-1}\rho_{2,a}\e^{-t\Phi}}{\det\delta}\\[4pt]
0 ~& 1
\end{pmatrix},\nn\\
 J^{(2)}_3&=&\begin{pmatrix}
I ~& 0\\[4pt]
-\rho_{3,a}\delta\det\delta\e^{t\Phi} ~& 1
\end{pmatrix},\quad
J^{(2)}_4=\begin{pmatrix}
I &~ -\frac{\delta^{-1}\rho_{4,a}\e^{-t\Phi}}{\det\delta}\\[4pt]
0 ~& 1
\end{pmatrix},\label{3.38}\\
J^{(2)}_5&=&\left( \begin{array}{cc}
I ~& 0\\[4pt]
\rho_{1,r}\delta\det\delta\e^{t\Phi} ~& 1\\
\end{array}
\right )
\left( \begin{array}{cc}
I ~& \frac{\delta^{-1}\rho_{4,r}\e^{-t\Phi}}{\det\delta} \\[4pt]
0 ~& 1 \\
\end{array}
\right ),\nn\\
J^{(2)}_6&=&\left( \begin{array}{cc}
I ~& 0\\[4pt]
\rho_{3,r}\delta_-\det\delta_-\e^{t\Phi} ~& 1 \\
\end{array}
\right )\left( \begin{array}{cc}
I ~& \frac{\delta_+^{-1}\rho_{2,r}\e^{-t\Phi}}{\det\delta_+} \\[4pt]
0 ~& 1 \\
\end{array}
\right )\nn
\eea
with $J^{(2)}_i$ denoting the restriction of $J^{(2)}$ to the contour labeled by $i$ in Fig. \ref{fig4}.

\begin{figure}[htbp]
  \centering
  \includegraphics[width=3.5in]{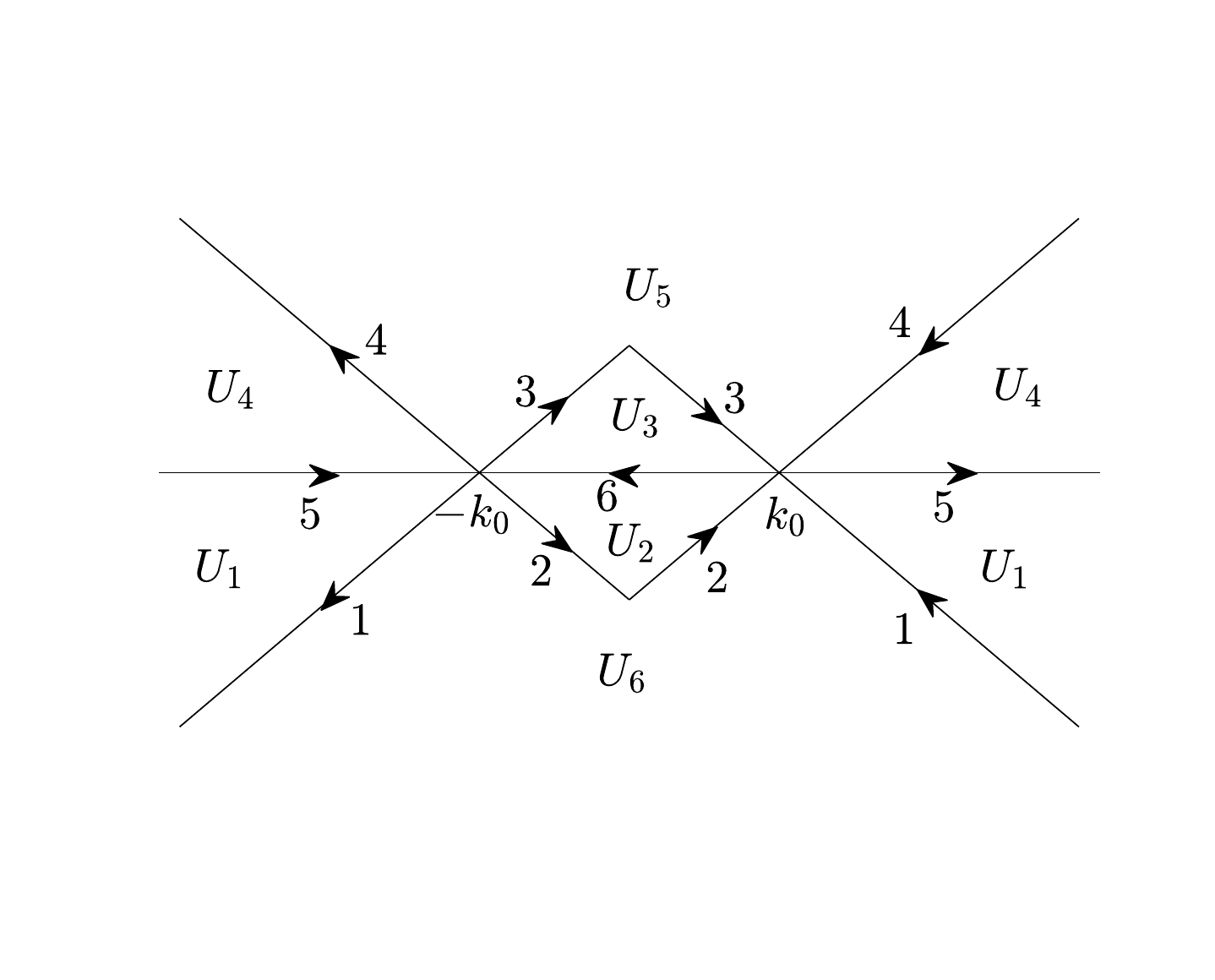}
  \caption{The jump contour $\Sigma$ and the open sets $\{U_j\}_1^6$ in the complex $k$-plane.}\label{fig4}
\end{figure}

Obviously, the jump matrix $J^{(2)}$ decays to identity matrix $I$ as $t\rightarrow\infty$ everywhere except near the critical points $\pm k_0$. This implies that the main contribution to the long-time asymptotics should come from the neighborhoods of the critical points $\pm k_0$.

\subsection{Local models near the stationary phase points $\pm k_0$}
To focus on $-k_0$ and $k_0$, we introduce the following scaling operators
\bea\label{3.39}
\begin{aligned}
S_{-k_0}:&k\mapsto\frac{z}{\sqrt{48tk_0}}-k_0,\\
S_{k_0}:&k\mapsto\frac{z}{\sqrt{48tk_0}}+k_0.
\end{aligned}
\eea
Let $D_\varepsilon(\pm k_0)$ denote the open disk of radius $\varepsilon$ centered at $\pm k_0$ for a small $\varepsilon>0$. Then, the operator $S_{\pm k_0}$ is a bijection from $D_\varepsilon(\pm k_0)$ to the open disk of radius $\sqrt{48tk_0}\varepsilon$ and centered at the origin for all $\zeta\in\mathcal{I}$.

Since the function $\delta(k)$ satisfying a $2\times2$ matrix RH problem \eqref{3.3} can not be solved explicitly, to proceed the next step, we will follow the idea developed in \cite{XG} to use the available function $\det\delta(k)$ to approximate $\delta(k)$ by error estimate. More precisely, we write
\bea
\bigg(\rho_{j,a}\delta\det\delta\e^{t\Phi}\bigg)(k)=\bigg(\rho_{j,a}(\det\delta)^2\e^{t\Phi}\bigg)(k)+
\bigg(\rho_{j,a}(\delta-\det\delta I)\det\delta\e^{t\Phi}\bigg)(k),\label{3.40}
\eea
for $j=1,3$. For the first part in the right-hand side of \eqref{3.40}, we have
\bea
S_{k_0}\big(\rho_{j,a}(\det\delta)^2\e^{t\Phi}\big)(z)&=&\eta^2\varrho^2\rho_{j,a}\bigg(\frac{z}{\sqrt{48tk_0}}+k_0\bigg),\label{3.41}
\eea
where
\be\label{3.42}
\begin{aligned}
\eta&=(192tk_0^3)^\frac{\ii\nu}{2}\e^{8\ii tk_0^3+\chi(k_0)},\\
\varrho&=z^{-\ii\nu}\e^{-\frac{\ii z^2}{4}(1+ z/\sqrt{432tk_0^3})}
\bigg(\frac{2k_0}{z/\sqrt{48tk_0}+2k_0}\bigg)^{-\ii\nu}\e^{(\chi([z/\sqrt{48tk_0}]+k_0)-\chi(k_0))}.
\end{aligned}
\ee
Let $\tilde{\delta}(k)=\rho_{j,a}(k)(\delta(k)-\det\delta(k) I)\e^{t\Phi(\zeta,k)},$ then we have the following estimate.
\begin{lemma}\label{lem3.2}
For $z\in\hat{L}=\{z=\alpha\e^{\frac{3\ii\pi}{4}}:-\infty<\alpha<+\infty\}$, as $t\rightarrow\infty$, the following estimate for $\tilde{\delta}(k)$ hold:
\be
|(S_{k_0}\tilde{\delta})(z)|\leq Ct^{-1/2},\label{3.43}
\ee
where the constant $C>0$ independent of $z,t$.
\end{lemma}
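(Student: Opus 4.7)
The strategy is to exploit two smallness factors available on $\hat{L}$: the unfolding $k = k_0 + z/\sqrt{48tk_0}$ forces $|k-k_0| = |\alpha|/\sqrt{48tk_0} = O(t^{-1/2})$, and the phase admits Gaussian decay after rescaling. Using $\Phi'(\zeta,k_0)=0$ and $\Phi''(\zeta,k_0)=-48\ii k_0$, the Taylor expansion gives $\Phi(\zeta,k) = \Phi(\zeta,k_0) - 24\ii k_0(k-k_0)^2 + O((k-k_0)^3)$; since $z^2 = -\ii\alpha^2$ on $\hat{L}$, this yields $t\,\text{Re}\,\Phi(\zeta,k) = -\alpha^2/2 + O(\alpha^3/\sqrt{t})$, hence $|\e^{t\Phi(k)}| \leq C\e^{-\alpha^2/2 + O(\alpha^3/\sqrt{t})}$ on $\hat{L}$.

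I would then split $\rho_{j,a}(k) = \rho_j(k_0) + [\rho_{j,a}(k)-\rho_j(k_0)]$, producing $(S_{k_0}\tilde{\delta})(z) = A + B$ with $A = \rho_j(k_0)[\delta-(\det\delta)I]\e^{t\Phi}$ and $B = [\rho_{j,a}-\rho_j(k_0)][\delta-(\det\delta)I]\e^{t\Phi}$. For $B$, the estimate (3.17) gives $|\rho_{j,a}-\rho_j(k_0)| \leq C|k-k_0|\e^{(t/4)|\text{Re}\,\Phi|}$, while (3.11) gives $|\delta-(\det\delta)I| \leq C$. Since $\text{Re}\,\Phi<0$ on $\hat{L}$, one has $\e^{(t/4)|\text{Re}\,\Phi|}|\e^{t\Phi}| = \e^{(3t/4)\text{Re}\,\Phi} \leq C\e^{-3\alpha^2/8+O(\alpha^3/\sqrt{t})}$, so $|B| \leq (C/\sqrt{t})|\alpha|\e^{-3\alpha^2/8+O(\alpha^3/\sqrt{t})} = O(t^{-1/2})$ uniformly in $\alpha$, using that $|\alpha|\e^{-c\alpha^2}$ is uniformly bounded.

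For the main piece $A$, I would appeal to the endpoint behavior of $\delta$ at $k_0$. The rank-one nature of the jump $I+\rho^\dag(k)\rho(k)$ yields a local factorization $\delta(k) = E(k)(k-k_0)^{-\ii N(k_0)}$, with $E$ regular at $k_0$, $N(k_0) = \nu P(k_0)$, and $P(k_0) = \rho^\dag(k_0)\rho(k_0)/|\rho(k_0)|^2$ the rank-one orthogonal projection. Because $P(k_0)^2 = P(k_0)$, the matrix power simplifies to $(k-k_0)^{-\ii N(k_0)} = (I-P(k_0))+P(k_0)(k-k_0)^{-\ii\nu}$, and correspondingly $\det\delta(k) = \det E(k)\cdot(k-k_0)^{-\ii\nu}$. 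Substitution gives
\begin{equation*}
\delta(k)-(\det\delta(k))I = E(k)(I-P(k_0)) + \big[E(k)P(k_0) - \det E(k)\cdot I\big](k-k_0)^{-\ii\nu}.
\end{equation*}
Left-multiplying by $\rho_j(k_0)$ and using that, for $j=1,3$, $\rho_j(k_0)$ is a scalar multiple of $\rho(k_0)$ (so $\rho_j(k_0)(I-P(k_0))=0$ and $\rho_j(k_0)P(k_0)=\rho_j(k_0)$), together with the identity $\rho(k_0) E(k_0) = \det E(k_0)\rho(k_0)$ (inherited from the scalar $\det\delta$-problem in the rank-one singular direction), the leading contributions cancel at $k=k_0$; a Taylor expansion of the regular factor $E$ about $k_0$ then gives $\rho_j(k_0)[\delta-(\det\delta)I](k) = O(|k-k_0|)$. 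Combined with $|\e^{t\Phi}|\leq C$ on $\hat{L}$ this yields $|A|\leq C|k-k_0| = O(t^{-1/2})$. The cases $j=2,4$ follow by Hermitian conjugation via the symmetry (3.10).

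The hard part is the rigorous justification of the local factorization $\delta = E\cdot(k-k_0)^{-\ii N(k_0)}$ with $E$ regular at $k_0$, together with the key algebraic identity $\rho(k_0)E(k_0)=\det E(k_0)\rho(k_0)$, which expresses the fact that the singular behavior of $\delta$ at $k_0$ is concentrated entirely in the direction of $\rho(k_0)$ with the same scalar prefactor as $\det\delta$. Since $\delta$ satisfies a $2\times 2$ matrix RH problem that is not explicitly solvable, this identity has to be extracted from the local (endpoint) parametrix of $\delta$; this is the essence of the ``error control'' of [XG] referenced in the introduction.
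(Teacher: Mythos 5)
Your splitting $\rho_{j,a}(k)=\rho_j(k_0)+[\rho_{j,a}(k)-\rho_j(k_0)]$ and the estimate of the term $B$ via \eqref{3.17}, \eqref{3.11} and the Gaussian decay of $\e^{t\Phi}$ on $\hat{L}$ are fine. The genuine gap is in the main term $A=\rho_j(k_0)[\delta(k)-\det\delta(k)I]\e^{t\Phi(k)}$: everything there rests on the identity $\rho(k_0)E(k_0)=\det E(k_0)\,\rho(k_0)$, which you assert but do not prove. Unwinding your own factorization gives $\rho_j(k_0)[\delta(k)-\det\delta(k)I]=(k-k_0)^{-\ii\nu}\big[\rho_j(k_0)E(k)-\det E(k)\,\rho_j(k_0)\big]$ with $|(k-k_0)^{-\ii\nu}|\asymp 1$, so your identity is \emph{equivalent} to the assertion that $\rho_j(k_0)[\delta(k)-\det\delta(k)I]\to 0$ as $k\to k_0$ --- that is, it is precisely the content of the estimate you are trying to prove, not an input obtainable ``for free'' from an endpoint parametrix. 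It amounts to requiring that $\rho(k_0)$ be a left eigenvector of the non-explicit regular factor $E(k_0)$ with eigenvalue exactly $\det E(k_0)$, and nothing in the jump data forces this. The structural reason your route fails is that freezing the coefficient at $k_0$ destroys the only cancellation actually available: the additive part of the jump of $\rho_j(k_0)[\delta-\det\delta\, I]$ across $(-k_0,k_0)$ is $\rho_j(k_0)[\rho^\dag(\xi)\rho(\xi)-|\rho(\xi)|^2I]\delta_-(\xi)$, which vanishes only at the single point $\xi=k_0$ and is $O(1)$ elsewhere on the interval, so the Cauchy integral representing this quantity has no reason to vanish as $k\to k_0$.

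The paper's mechanism is different and avoids any local analysis of $\delta$ at $k_0$. It keeps the full product $\tilde{\delta}=\rho_{j,a}(\delta-\det\delta\, I)\e^{t\Phi}$ and computes its jump: the multiplicative part is the scalar $1+\rho\rho^\dag$ (the same as for $\det\delta$), and the additive part is $\e^{t\Phi}f$ with $f=\rho_{j,a}(\rho^\dag\rho-\rho\rho^\dag I)\delta_-$. The key algebraic fact is the \emph{pointwise} identity $\rho_j(\xi)\big(\rho^\dag(\xi)\rho(\xi)-\rho(\xi)\rho^\dag(\xi)I\big)=0$ for every $\xi$ (valid because $\rho_j$ is a scalar multiple of $\rho$ and $\rho(\rho^\dag\rho)=(\rho\rho^\dag)\rho$), so that $f=\rho_{j,r}(\rho\rho^\dag I-\rho^\dag\rho)\delta_-=O(t^{-3/2})$ uniformly by part (4) of Lemma \ref{lem3.1}. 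The function $\tilde{\delta}$ is then written as $X(k)$ times a Cauchy integral of $\e^{t\Phi}f/X_+$ over $(-k_0,k_0)$, and the bound follows by splitting off a $1/t$-neighbourhood of $k_0$ and deforming the analytic part of $f$ into the region where $\mathrm{Re}\,\Phi<0$. If you want to repair your argument you must keep $\rho_{j,a}(k)$ (not the frozen value $\rho_j(k_0)$) attached to $\delta-\det\delta\, I$ throughout, precisely so that this $\xi$-by-$\xi$ cancellation survives; as written, the estimate of $A$ is unsupported.
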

\begin{proof}
It follows from \eqref{3.3} and \eqref{3.4} that $\tilde{\delta}$ satisfies the following RH problem
\be\label{3.44}
\left\{
\begin{aligned}
\tilde{\delta}_+(k)&=(1+\rho(k)\rho^\dag(k))\tilde{\delta}_-(k)+\e^{t\Phi(\zeta,k)}f(k),~~|k|<k_0,\\
&=\tilde{\delta}_-(k),\qquad\qquad~~~~\qquad\qquad\qquad\qquad|k|>k_0,\\
\tilde{\delta}(k)&\rightarrow 0,\qquad\qquad\qquad\qquad\qquad\qquad\qquad\quad k\rightarrow\infty,
\end{aligned}
\right.
\ee
where $f(k)=[\rho_{j,a}(\rho^\dag\rho-\rho\rho^\dag I)\delta_-](k)$. Then the function $\tilde{\delta}(k)$ can be expressed by
\bea\label{3.45}
\begin{aligned}
\tilde{\delta}(k)&=X(k)\int_{-k_0}^{k_0}\frac{\e^{t\Phi(\zeta,\xi)}f(\xi)}{X_+(\xi)(\xi-k)}\dd\xi,\\
X(k)&=\exp\bigg\{\frac{1}{2\pi\ii}\int_{-k_0}^{k_0}\frac{\ln(1+\rho(\xi)\rho^\dag(\xi))}{\xi-k}\dd\xi\bigg\}.
\end{aligned}
\eea
On the other hand, we find that $\rho_{j,a}(\rho^\dag\rho-\rho\rho^\dag I)=\rho_{j,r}(\rho\rho^\dag I-\rho^\dag\rho)$. Thus, we get $f(k)=O(t^{-3/2})$. Similar to Lemma \ref{lem3.1}, $f(k)$ can be decomposed into two parts: $f_a(x,t,k)$ which has an analytic continuation to $\Om_3$ and $f_r(x,t,k)$ which is a small remainder. In particular,
\bea\label{3.46}
\begin{aligned}
|f_a(x,t,k)|&\leq\frac{C}{1+|k|^2} \e^{\frac{t}{4}|\text{Re}\Phi(\zeta,k)|},~k\in L_t,\\
|f_r(x,t,k|&\leq\frac{C}{1+|k|^2} t^{-3/2},\qquad~~k\in(-k_0,k_0),
\end{aligned}
\eea
where $$L_t=\{k=k_0-\frac{1}{t}+\alpha\e^{\frac{3\ii\pi}{4}}:0\leq\alpha\leq\sqrt{2}(k_0-\frac{1}{2t})\}
\cup\{k=-k_0+\alpha\e^{\frac{\ii\pi}{4}}:0\leq\alpha\leq\sqrt{2}(k_0-\frac{1}{2t})\}.$$
Therefore, for $z\in\hat{L}$, we can find
\bea
(S_{k_0}\tilde{\delta})(z)&=&X(k_0+\frac{z}{\sqrt{48tk_0}})\int_{k_0-\frac{1}{t}}^{k_0}
\frac{\e^{t\Phi(\zeta,\xi)}f(\xi)}{X_+(\xi)(\xi-k_0-\frac{z}{\sqrt{48tk_0}})}\dd\xi\nn\\
&&+X(k_0+\frac{z}{\sqrt{48tk_0}})\int_{-k_0}^{k_0-\frac{1}{t}}
\frac{\e^{t\Phi(\zeta,\xi)}f_a(x,t,\xi)}{X_+(\xi)(\xi-k_0-\frac{z}{\sqrt{48tk_0}})}\dd\xi\nn\\
&&+X(k_0+\frac{z}{\sqrt{48tk_0}})\int_{-k_0}^{k_0-\frac{1}{t}}
\frac{\e^{t\Phi(\zeta,\xi)}f_r(x,t,\xi)}{X_+(\xi)(\xi-k_0-\frac{z}{\sqrt{48tk_0}})}\dd\xi\nn\\
&=&I_1+I_2+I_3.\nn
\eea
Then,
\bea
|I_1|&\leq&\int_{k_0-\frac{1}{t}}^{k_0}\frac{|f(\xi)|}{|\xi-k_0-\frac{z}{\sqrt{48tk_0}}|}\dd\xi\leq Ct^{-3/2}\bigg|\ln\bigg|1-\frac{4\sqrt{3}}{zt^{1/2}}\bigg|\bigg|\leq Ct^{-2},\nn\\
|I_3|&\leq&\int_{-k_0}^{k_0-\frac{1}{t}}\frac{|f_r(x,t,\xi)|}{|\xi-k_0-\frac{z}{\sqrt{48tk_0}}|}\dd\xi\leq Ct^{-1/2}.\nn
\eea
By the Cauchy's theorem, we can evaluate $I_2$ along the contour $L_t$ instead of the interval $(-k_0,k_0-\frac{1}{t})$.
Using the fact $\text{Re}\Phi(\zeta,k)<0$ in $\Om_3$, we can obtain $|I_2|\leq C\e^{-ct}$. This completes the proof of the lemma.
\end{proof}
\begin{remark}
There is a similar estimate
\be\label{3.47}
|(S_{k_0}\hat{\delta})(z)|\leq Ct^{-1/2},~t\rightarrow\infty,~
\ee
for $z\in\bar{\hat{L}}$, where $\hat{\delta}(k)=[\delta^{-1}(k)-(\det\delta)^{-1}I]\rho_{j,a}(k)\e^{-t\Phi(\zeta,k)}$, $j=2,4$.
\end{remark}

In other words, we have the following important relation:
\be\label{3.48}
(S_{k_0}J^{(2)}_i)(x,t,z)=\tilde{J}(x,t,z)+O(t^{-1/2}),~i=1,\ldots,4,
\ee
where $\tilde{J}(x,t,z)$ is given by
\be\label{3.49}
\tilde{J}(x,t,z)=\left\{
\begin{aligned}
&\begin{pmatrix}
I ~& 0\\[4pt]
-\eta^2\varrho^2(t,z)\rho_{1,a}\big(\frac{z}{\sqrt{48tk_0}}+k_0\big) ~& 1
\end{pmatrix},\qquad k\in (\mathcal{X}_{k_0}^\varepsilon)_1,\\
&\begin{pmatrix}
I ~& -\eta^{-2}\varrho^{-2}(t,z)\rho_{2,a}\big(\frac{z}{\sqrt{48tk_0}}+k_0\big)\\[4pt]
0 ~& 1
\end{pmatrix},\quad k\in (\mathcal{X}_{k_0}^\varepsilon)_2,\\
&\begin{pmatrix}
I ~& 0\\[4pt]
-\eta^2\varrho^2(t,z)\rho_{3,a}\big(\frac{z}{\sqrt{48tk_0}}+k_0\big) ~& 1
\end{pmatrix},\qquad~ k\in (\mathcal{X}_{k_0}^\varepsilon)_3,\\
&\begin{pmatrix}
I ~& -\eta^{-2}\varrho^{-2}(t,z)\rho_{4,a}\big(\frac{z}{\sqrt{48tk_0}}+k_0\big)\\[4pt]
0 ~& 1
\end{pmatrix},\quad~ k\in (\mathcal{X}_{k_0}^\varepsilon)_4.
\end{aligned}
\right.
\ee
Here $\mathcal{X}_{k_0}=X+k_0$ denote the cross $X$ defined by \eqref{3.50} centered at $k_0$, $\mathcal{X}_{k_0}^\varepsilon=\mathcal{X}_{k_0}\cap D_\varepsilon(k_0)$, and $X=X_1\cup X_2\cup X_3\cup X_4\subset\bfC$ be the cross defined by
\be\label{3.50}
\begin{aligned}
X_1&=\{l\e^{-\frac{\ii\pi}{4}}|0\leq l<\infty\},~~ X_2=\{l\e^{-\frac{3\ii\pi}{4}}|0\leq l<\infty\},\\
X_3&=\{l\e^{\frac{3\ii\pi}{4}}|0\leq l<\infty\},~~~X_4=\{l\e^{\frac{\ii\pi}{4}}|0\leq l<\infty\},
\end{aligned}
\ee
and oriented as in Fig. \ref{fig5}.
\begin{figure}[htbp]
  \centering
  \includegraphics[width=2.8in]{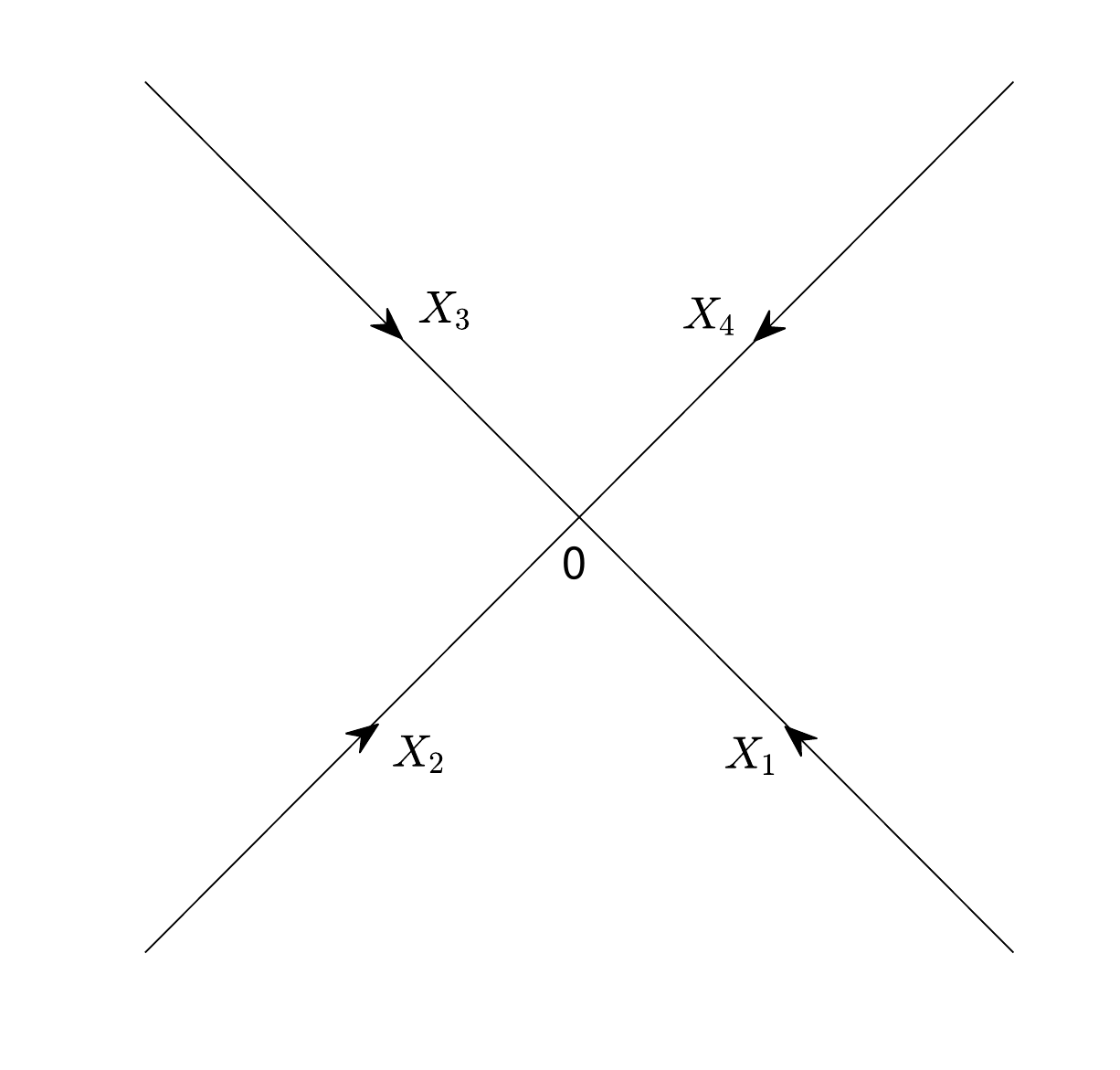}
  \caption{The contour $X=X_1\cup X_2\cup X_3\cup X_4$.}\label{fig5}
\end{figure}

For the jump matrix $\tilde{J}(x,t,z)$, for any fixed $z\in X$ and $\zeta\in\mathcal{I}$, we have
\be\label{3.51}
\begin{aligned}
\rho_{1,a}\big(\frac{z}{\sqrt{48tk_0}}+k_0\big)&\rightarrow \rho(k_0),\\
\rho_{3,a}\big(\frac{z}{\sqrt{48tk_0}}+k_0\big)&\rightarrow-\frac{\rho(k_0)}{1+\rho(k_0)\rho^\dag(k_0)},\\ \varrho^2\rightarrow\e^{-\frac{\ii z^2}{2}}z^{-2\ii\nu},
\end{aligned}
\ee
as $t\rightarrow\infty$.
This implies that the jump matrix $\tilde{J}$ tend to the matrix $J^{(k_0)}$
\be\label{3.52}
J^{(k_0)}(x,t,z)=\left\{
\begin{aligned}
&\begin{pmatrix}
I ~& 0\\[4pt]
-\eta^2\e^{-\frac{\ii z^2}{2}}z^{-2\ii\nu}\rho(k_0) ~& 1
\end{pmatrix},~~\qquad z\in X_1,\\
&\begin{pmatrix}
I ~& \eta^{-2}\e^{\frac{\ii z^2}{2}}z^{2\ii\nu}\frac{\rho^\dag(k_0)}{1+\rho(k_0)\rho^\dag(k_0)}\\[4pt]
0 ~& 1
\end{pmatrix},~~\quad z\in X_2,\\
&\begin{pmatrix}
I ~& 0\\[4pt]
\eta^2\e^{-\frac{\ii z^2}{2}}z^{-2\ii\nu}\frac{\rho(k_0)}{1+\rho(k_0)\rho^\dag(k_0)} ~& 1
\end{pmatrix},\quad z\in X_3,\\
&\begin{pmatrix}
I ~& -\eta^{-2}\e^{\frac{\ii z^2}{2}}z^{2\ii\nu}\rho^\dag(k_0)\\[4pt]
0 ~& 1
\end{pmatrix},\qquad\quad~ z\in X_4
\end{aligned}
\right.
\ee
for large $t$.
\begin{theorem}\label{th3.1}
The following RH problem:
\be\label{3.53}\left\{
\begin{aligned}
&M^X_+(x,t;z)=M^X_-(x,t;z)J^X(x,t,z),~\text{for~almost~every}~z\in X,\\
&M^X(x,t;z)\rightarrow I,~~~~~\quad\qquad\qquad\qquad\text{as}~z\rightarrow\infty,
\end{aligned}
\right.
\ee
with the jump matrix $J^X(x,t,z)=\eta^{\hat{\sigma}}J^{(k_0)}(x,t,z)$ has a unique solution $M^X(x,t;z)$. If we let
\be\label{3.54}
M^X(x,t;z)=I+\frac{M^X_1(x,t)}{z}+O\bigg(\frac{1}{z^2}\bigg),\quad z\rightarrow\infty,
\ee
then we have the following relation
\be\label{3.55}
(M^X_1)_{21}=-\ii\beta^X,~\beta^X=\frac{\nu\Gamma(-\ii\nu)\e^{\frac{\ii\pi}{4}-\frac{\pi\nu}{2}}}{\sqrt{2\pi}}\rho(k_0),
\ee
where $\Gamma(\cdot)$ denotes the standard Gamma function. Moreover, for each compact subset $\mathcal{D}$ of $\bfC$,
\be\label{3.56}
\sup_{\rho(k_0)\in\mathcal{D}}\sup_{z\in\bfC\setminus X}|M^X(x,t;z)|<\infty.
\ee
\end{theorem}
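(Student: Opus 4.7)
The plan is to reduce \eqref{3.53} to the classical parabolic cylinder Riemann--Hilbert problem on the cross $X$, whose solution is tabulated in every reference on the Deift--Zhou method, and then to read off the coefficient $M_1^X$ from its large-$z$ expansion. Because the paper already works in the $2\times 2$ block form, the only novelty compared with the scalar case is that the off-diagonal coefficients of the model solution are $2\times 1$ and $1\times 2$ blocks rather than complex numbers; the scalar parabolic cylinder formulas then apply entry by entry to these blocks.

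I would proceed in four steps. First, absorb the constant factor $\eta^{\hat\sigma}$ into the normalization by setting $\widetilde M(z)=M^X(x,t;z)\,\eta^{\sigma}$, where $\sigma$ is now interpreted as the block sign matrix $\mathrm{diag}(I,-1)$ consistent with $\hat\sigma$ in \eqref{3.53}. The jump condition becomes $\widetilde M_+=\widetilde M_-J^{(k_0)}$ and $\widetilde M(z)\to\eta^{\sigma}$ as $z\to\infty$, so $\eta$ now enters only through a constant normalization at infinity. Second, kill the $z$ dependence of the jumps by defining
\[
\Psi(z) = \widetilde M(z)\, z^{\ii\nu\sigma}\,\e^{-\ii z^2\sigma/4}.
\]
Inspection of \eqref{3.52} shows that the jumps of $\Psi$ across $X_1,\ldots,X_4$ are now the standard constant triangular matrices built only from $\rho(k_0)$ and the positive scalar $1+\rho(k_0)\rho^\dag(k_0)$.

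Third, derive the ODE satisfied by $\Psi$. Since the jump of $\Psi$ is piecewise constant along rays from the origin, the matrix $\frac{d\Psi}{dz}\,\Psi^{-1}+\frac{\ii z}{2}\sigma$ has no jump across $X$ and is therefore entire; matching its growth at infinity against $\widetilde M=\eta^{\sigma}+\eta^{\sigma}M_1^X/z+\cdots$ forces it to equal the constant matrix $[\sigma,M_1^X]$. The resulting linear system decouples into scalar parabolic cylinder equations for the entries of the off-diagonal blocks of $\Psi$. Solving them with the prescribed jumps and growth expresses $\Psi$ in terms of $D_{\ii\nu}$ and $D_{-\ii\nu}$ evaluated at rotations of $z$ adapted to each sector. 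Applying the classical two-term asymptotics of $D_a$ together with the reflection formula for $\Gamma$ to the coefficient of $z^{-1}$ then yields
\[
(M_1^X)_{21} = -\ii\beta^X, \qquad \beta^X = \frac{\nu\,\Gamma(-\ii\nu)\,\e^{\ii\pi/4 - \pi\nu/2}}{\sqrt{2\pi}}\,\rho(k_0),
\]
with $\rho(k_0)$ entering as a row-vector factor.

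Existence and uniqueness of $M^X$ follow from the vanishing lemma applied to $J^X$: its factorization \eqref{3.52} is of the same positive-definite, Schwarz-symmetric type as \eqref{3.2}, so the argument in \cite{MJA} applies verbatim. The uniform bound \eqref{3.56} is then inherited from the explicit parabolic cylinder representation, since every quantity appearing in it depends continuously on $\rho(k_0)\in\mathcal{D}$. The main technical obstacle I anticipate is bookkeeping rather than mathematics: one must check that the $2\times 1$/$1\times 2$ vector nature of $\rho(k_0)$ and $\rho^\dag(k_0)$ does not spoil the triangular factorizations of $J^{(k_0)}$ (it does not, because $1+\rho(k_0)\rho^\dag(k_0)$ remains scalar and all matrix products make sense), and that the four sectors are stitched together consistently with the contour orientation of Fig. \ref{fig5}. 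The symmetry \eqref{3.19} should then guarantee that the analogous local model at $-k_0$ produces a companion coefficient, which will be essential for combining the two stationary-point contributions in the next subsection.
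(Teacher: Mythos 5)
Your proposal follows essentially the same route as the paper's appendix: conjugate away the constant $\eta^{\hat{\sigma}}$ and the $z$-dependence of the jump, use a Liouville argument to obtain the constant-coefficient ODE $\frac{\dd\Psi}{\dd z}=\big(\frac{\ii z}{2}\sigma+\beta\big)\Psi$ with $\beta=-\frac{\ii}{2}[\sigma,M_1^X]$, reduce to Weber's equation, extract $\beta^X$ from the parabolic-cylinder asymptotics and connection formula, and get uniqueness from $\det J^X=1$ together with the explicit representation for the bound \eqref{3.56}. The only corrections needed are bookkeeping slips that the computation itself would force: the conjugating factor must be $z^{\ii\nu\sigma}\e^{+\ii z^2\sigma/4}$ (your $\e^{-\ii z^2\sigma/4}$ leaves the jumps $z$-dependent), the entire matrix obtained from Liouville is $-\frac{\ii}{2}[\sigma,M_1^X]$ rather than $[\sigma,M_1^X]$, and the expansion of $\widetilde{M}$ reads $\eta^{\sigma}+M_1^X\eta^{\sigma}/z+\cdots$ rather than $\eta^{\sigma}+\eta^{\sigma}M_1^X/z+\cdots$.
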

\begin{proof}
See appendix.
\end{proof}
Therefore, we can approximate $M^{(2)}$ in the neighborhood $D_\varepsilon(k_0)$ of $k_0$ by
\be\label{3.57}
M^{(k_0)}(x,t;k)=\eta^{-\hat{\sigma}}M^X(x,t;z).
\ee
\begin{lemma}\label{lem3.3}
For each $\zeta\in\mathcal{I}$ and $t>0$, the function $M^{(k_0)}(x,t;k)$ defined in \eqref{3.57} is an analytic function of $k\in D_\varepsilon(k_0)\setminus\mathcal{X}_{k_0}^\varepsilon$.
Across $\mathcal{X}_{k_0}^\varepsilon$, $M^{(k_0)}$ satisfied the jump condition $M_+^{(k_0)}=M_-^{(k_0)}J^{(k_0)}$, where the  jump matrix $J^{(k_0)}$ satisfies the following estimates for $1\leq p\leq\infty$:
\be\label{3.59}
\|J^{(2)}-J^{(k_0)}\|_{L^p(\mathcal{X}_{k_0}^\varepsilon)}\leq Ct^{-\frac{1}{2}-\frac{1}{2p}}\ln t,
\quad t>3,~\zeta\in\mathcal{I},
\ee
where $C>0$ is a constant independent of $t,\zeta,z$. Moreover, as $t\rightarrow\infty$,
\be\label{3.60}
\|(M^{(k_0)})^{-1}(x,t;k)-I\|_{L^\infty(\partial D_\varepsilon(k_0))}=O(t^{-1/2}),
\ee
and
\be\label{3.61}
-\frac{1}{2\pi\ii}\int_{\partial D_\varepsilon(k_0)}((M^{(k_0)})^{-1}(x,t;k)-I)\dd k=\frac{\eta^{-\hat{\sigma}}M^X_1}
{\sqrt{48tk_0}}+O(t^{-1}).
\ee
\end{lemma}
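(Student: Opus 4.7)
The plan is to verify the four assertions in sequence using the definition $M^{(k_0)}(x,t;k)=\eta^{-\hat{\sigma}}M^X(x,t;z)$ with $z=\sqrt{48tk_0}\,(k-k_0)$, the approximation \eqref{3.48}, and the model solution $M^X$ from Theorem \ref{th3.1}.

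The analyticity and jump relation are immediate. The scaling $S_{k_0}$ is a biholomorphism $D_\varepsilon(k_0)\to\{|z|<\varepsilon\sqrt{48tk_0}\}$ that carries $\mathcal{X}_{k_0}^\varepsilon$ onto the corresponding piece of $X$, while $\eta^{-\hat{\sigma}}$ acts as conjugation by a $k$-independent diagonal matrix. Hence $M^{(k_0)}$ is analytic on $D_\varepsilon(k_0)\setminus\mathcal{X}_{k_0}^\varepsilon$, and the identity $J^X=\eta^{\hat{\sigma}}J^{(k_0)}$ yields $M_+^{(k_0)}=\eta^{-\hat{\sigma}}(M_-^XJ^X)=M_-^{(k_0)}J^{(k_0)}$ on $\mathcal{X}_{k_0}^\varepsilon$.

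The main work is the $L^p$ estimate \eqref{3.59}. I work on one ray, say $(\mathcal{X}_{k_0}^\varepsilon)_1$, where $z=l\e^{-\ii\pi/4}$ with $0\le l\le\varepsilon\sqrt{48tk_0}$. Comparing the $(2,1)$-entries of \eqref{3.49} and \eqref{3.52}, the difference splits as $A+B$ with
\[
A=-\eta^2\bigl[\varrho^2-\e^{-\ii z^2/2}z^{-2\ii\nu}\bigr]\rho_{1,a}\bigl(k_0+z/\sqrt{48tk_0}\bigr),
\]
\[
B=-\eta^2\,\e^{-\ii z^2/2}z^{-2\ii\nu}\bigl[\rho_{1,a}\bigl(k_0+z/\sqrt{48tk_0}\bigr)-\rho(k_0)\bigr].
\]
A Taylor expansion gives $\text{Re}(t\Phi-16\ii tk_0^3)=-l^2/2+O(l^3t^{-1/2})$ along this ray, so the estimate \eqref{3.17} yields $|B|\le C|z|\,t^{-1/2}\e^{-cl^2}$. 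For $A$, factor $\varrho^2=\e^{-\ii z^2/2}z^{-2\ii\nu}R(z,t)$ with
\[
R(z,t)=\e^{-\ii z^3/(2\sqrt{432tk_0^3})}\Bigl(\tfrac{2k_0}{z/\sqrt{48tk_0}+2k_0}\Bigr)^{-2\ii\nu}\e^{2[\chi(k_0+z/\sqrt{48tk_0})-\chi(k_0)]}.
\]
The first two factors expand as $1+O((|z|+|z|^3)\,t^{-1/2})$. The third carries the $\ln t$: since $\ln\bigl((1+\rho\rho^\dag)/(1+\rho(k_0)\rho^\dag(k_0))\bigr)$ vanishes to first order at both endpoints $\pm k_0$, the Cauchy integral \eqref{3.7} is only log-Lipschitz at $k_0$. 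Writing the vanishing factor as $(\xi^2-k_0^2)\tilde h(\xi)$ and using the Sokhotski--Plemelj formula leads to $|\chi(k)-\chi(k_0)|\le C|k-k_0|(1+|\ln|k-k_0||)$, hence $|\chi(k_0+z/\sqrt{48tk_0})-\chi(k_0)|\le C(|z|/\sqrt{t})\ln t$ uniformly for $|z|\le\varepsilon\sqrt{t}$. Thus $|A|\le C(|z|+|z|^3)\,t^{-1/2}(\ln t)\,\e^{-cl^2}$. The other three rays and the off-diagonal entries are handled symmetrically. Changing variables via $|dk|=(48tk_0)^{-1/2}|dz|$ and using the boundedness of $\int_0^\infty(l+l^3)^p\e^{-cpl^2}dl$ yields $\|J^{(2)}-J^{(k_0)}\|_{L^p(\mathcal{X}_{k_0}^\varepsilon)}\le Ct^{-1/2-1/(2p)}\ln t$.

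Finally, on $\partial D_\varepsilon(k_0)$ one has $|z|=\varepsilon\sqrt{48tk_0}\to\infty$, so \eqref{3.54} combined with $M^{(k_0)}=\eta^{-\hat{\sigma}}M^X$ gives
\[
(M^{(k_0)})^{-1}(x,t;k)-I=-\frac{\eta^{-\hat{\sigma}}M^X_1(x,t)}{\sqrt{48tk_0}\,(k-k_0)}+O(t^{-1})
\]
uniformly in $k$, which immediately yields \eqref{3.60}. Integrating around $\partial D_\varepsilon(k_0)$, with orientation chosen so that the residue produces the stated sign, extracts the residue $\eta^{-\hat{\sigma}}M^X_1/\sqrt{48tk_0}$ from the leading term, while the $O(t^{-1})$ remainder integrates over a fixed-length contour to remain $O(t^{-1})$; this establishes \eqref{3.61}. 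The main obstacle is isolating the exact source of the $\ln t$ factor, namely the log-Lipschitz regularity of $\chi$ at the endpoint $k_0$; all other steps reduce to routine Taylor expansion, Gaussian decay, and a residue calculation.
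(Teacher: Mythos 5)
Your proposal is correct and follows essentially the same route as the paper: the same implicit splitting $J^{(2)}-J^{(k_0)}=(J^{(2)}-\tilde J)+(\tilde J-J^{(k_0)})$ with \eqref{3.48} absorbing the first piece, a pointwise Gaussian-weighted $O(t^{-1/2}\ln t)$ bound for the second followed by the $L^\infty$/$L^1$ interpolation $\|f\|_{L^p}\leq\|f\|_{L^\infty}^{1-1/p}\|f\|_{L^1}^{1/p}$, and the large-$z$ expansion \eqref{3.54} of $M^X$ plus Cauchy's formula on $\partial D_\varepsilon(k_0)$. The only difference is that where the paper simply cites Lemma 3.35 of Deift--Zhou for the key estimate $\|\tilde J-J^{(k_0)}\|_{L^\infty}\leq C|\e^{\frac{\ii\gamma}{2}z^2}|t^{-1/2}\ln t$, you carry out that computation explicitly (the $A+B$ split via \eqref{3.17} and the log-Lipschitz bound on $\chi$ at $k_0$ as the source of the $\ln t$), which is precisely the content of the cited lemma.
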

\begin{proof}
The analyticity of $M^{(k_0)}$ is obvious.
On the other hand, we have $J^{(2)}-J^{(k_0)}=J^{(2)}-\tilde{J}+\tilde{J}-J^{(k_0)}$. However, a careful computation as the Lemma 3.35 in \cite{PD}, we conclude that
\be\label{3.63}
\|\tilde{J}-J^{(k_0)}\|_{L^\infty((\mathcal{X}_{k_0}^\varepsilon)_4)}\leq C|\e^{\frac{\ii\gamma}{2}z^2}|t^{-1/2}\ln t,\quad 0<\gamma<\frac{1}{2},~t>3,~\zeta\in\mathcal{I},
\ee
for $k\in(\mathcal{X}_{k_0}^\varepsilon)_4$, that is, $z=\sqrt{48tk_0}u\e^{\frac{\ii\pi}{4}}$, $0\leq u\leq\varepsilon$.
Thus, for $\varepsilon$ small enough, it follows from \eqref{3.48} that
\be\label{3.64}
\|J^{(2)}-J^{(k_0)}\|_{L^\infty((\mathcal{X}_{k_0}^\varepsilon)_4)}\leq C|\e^{\frac{\ii\gamma}{2}z^2}|t^{-1/2}\ln t.
\ee
Then we have
\be\label{3.65}
\|J^{(2)}-J^{(k_0)}\|_{L^1((\mathcal{X}_{k_0}^\varepsilon)_4)}\leq Ct^{-1}\ln t,\quad t>3,~\zeta\in\mathcal{I}.
\ee
By the general inequality $\|f\|_{L^p}\leq\|f\|^{1-1/p}_{L^\infty}\|f\|_{L^1}^{1/p}$, we find
\be\label{3.66}
\|J^{(2)}-J^{(k_0)}\|_{L^p((\mathcal{X}_{k_0}^\varepsilon)_4)}\leq Ct^{-1/2-1/2p}\ln t,\quad t>3,~\zeta\in\mathcal{I}.
\ee
The norms on $(\mathcal{X}_{k_0}^\varepsilon)_j$, $j=1,2,3$, are estimated in a similar way. Therefore, \eqref{3.59} follows.

If $k\in\partial D_\varepsilon(k_0)$, the variable $z=\sqrt{48tk_0}(k-k_0)$ tends to infinity as $t\rightarrow\infty$. It follows from \eqref{3.54} that
\berr
M^X(x,t;z)=I+\frac{M^X_1}{\sqrt{48tk_0}(k-k_0)}+O\bigg(\frac{1}{t}\bigg),\quad t\rightarrow\infty,~k\in \partial D_\varepsilon(k_0).
\eerr
Since
 $$M^{(k_0)}(x,t;k)=\eta^{-\hat{\sigma}}M^X(x,t;z),$$
thus we have
\be\label{3.67}
(M^{(k_0)})^{-1}(x,t;k)-I=-\frac{\eta^{-\hat{\sigma}}M^X_1}
{\sqrt{48tk_0}(k-k_0)}+O\bigg(\frac{1}{t}\bigg),\quad t\rightarrow\infty,~k\in \partial D_\varepsilon(k_0).
\ee
The estimate \eqref{3.60} immediately follows from \eqref{3.67} and $|M_1^X|\leq C$. By Cauchy's formula and \eqref{3.67}, we derive \eqref{3.61}.
\end{proof}

Let $$\mathcal{X}_{-k_0}^\varepsilon=\cup_{j=1}^4\Sigma_j\cap D_\varepsilon(-k_0),$$ where $\{\Sigma_j\}_1^{6}$ denote the restriction of $\Sigma$ to the contour labeled by $j$ in Fig. \ref{fig4}. Proceeding the analogous steps as above, we can approximate $M^{(2)}$ in the neighborhood $D_\varepsilon(-k_0)$ of $-k_0$ by $M^{(-k_0)}(x,t;k)$, where $M^{(-k_0)}$ satisfies the following RH problem:
\be\label{3.68}
\begin{aligned}
M^{(-k_0)}_+(x,t;k)&=M^{(-k_0)}_-(x,t;k)J^{(-k_0)}(x,t,z),~k\in \mathcal{X}_{-k_0}^\varepsilon,\\
M^{(-k_0)}(x,t;k)&\rightarrow I,\qquad\qquad\qquad\qquad\qquad\quad~ \text{as}~k\rightarrow\infty,
\end{aligned}
\ee
where the jump matrix $J^{(-k_0)}(x,t,z)$ is given by
\bea\label{3.69}
J^{(-k_0)}(x,t,z)=\hat{\eta}^{-\hat{\sigma}}J^Y(x,t,z), ~\hat{\eta}=(192tk_0^3)^{-\frac{\ii\nu}{2}}\e^{-8\ii tk_0^3+\chi(-k_0)},
\eea
and
\bea\label{3.70}
J^Y(x,t,z)&=&\left\{
\begin{aligned}
&\begin{pmatrix}
I ~& -\e^{-\frac{\ii z^2}{2}}(-z)^{-2\ii\nu}\frac{\rho^\dag(-k_0)}{1+\rho(-k_0)\rho^\dag(-k_0)}\\[4pt]
0 ~& 1
\end{pmatrix},~ z\in X_1,\\
&\begin{pmatrix}
I ~& 0\\[4pt]
\e^{\frac{\ii z^2}{2}}(-z)^{2\ii\nu}\rho(-k_0) ~& 1
\end{pmatrix},~~~~\qquad\qquad z\in X_2,\\
&\begin{pmatrix}
I ~& \e^{-\frac{\ii z^2}{2}}(-z)^{-2\ii\nu}\rho^\dag(-k_0)\\[4pt]
0 ~& 1
\end{pmatrix},\qquad\qquad z\in X_3,\\
&\begin{pmatrix}
I ~& 0\\[4pt]
-\e^{\frac{\ii z^2}{2}}(-z)^{2\ii\nu}\frac{\rho(-k_0)}{1+\rho(-k_0)\rho^\dag(-k_0)} ~& 1
\end{pmatrix},\qquad z\in X_4.
\end{aligned}
\right.
\eea
Using the formulae \eqref{2.28} and \eqref{3.8}, one verifies that
\be\label{3.71}
\begin{pmatrix}
0 ~& 1 ~& 0\\
1 ~& 0 ~& 0\\
0 ~& 0 ~& 1\\
\end{pmatrix}\overline{J^{Y}(x,t,-\bar{z})}\begin{pmatrix}
0 ~& 1 ~& 0\\
1 ~& 0 ~& 0\\
0 ~& 0 ~& 1\\
\end{pmatrix}=J^X(x,t,z),
\ee
which in turn implies, by uniqueness, that
\be\label{3.72}
M^{(-k_0)}(x,t;k)=\hat{\eta}^{-\hat{\sigma}}\begin{pmatrix}
0 ~& 1 ~& 0\\
1 ~& 0 ~& 0\\
0 ~& 0 ~& 1\\
\end{pmatrix}\overline{M^X(x,t,-\bar{z})}\begin{pmatrix}
0 ~& 1 ~& 0\\
1 ~& 0 ~& 0\\
0 ~& 0 ~& 1\\
\end{pmatrix}.
\ee
Furthermore, we have the similar results about $M^{(-k_0)}(x,t;k)$ as stated in Lemma \ref{lem3.3}. The jump matrix $J^{(-k_0)}$ satisfies the following estimates for $1\leq p\leq\infty$:
\be\label{3.59'}
\|J^{(2)}-J^{(-k_0)}\|_{L^p(\mathcal{X}_{-k_0}^\varepsilon)}\leq Ct^{-\frac{1}{2}-\frac{1}{2p}}\ln t,
\quad t>3,~\zeta\in\mathcal{I}.
\ee
As $t\rightarrow\infty$,
\be\label{3.60'}
\|(M^{(-k_0)})^{-1}(x,t;k)-I\|_{L^\infty(\partial D_\varepsilon(-k_0))}=O(t^{-1/2}),
\ee
and
\be\label{3.61'}
-\frac{1}{2\pi\ii}\int_{\partial D_\varepsilon(-k_0)}((M^{(-k_0)})^{-1}(x,t;k)-I)\dd k=\frac{\hat{\eta}^{-\hat{\sigma}}M^Y_1}
{\sqrt{48tk_0}}+O(t^{-1}),
\ee
where $M^Y_1$ is defined by
\be\label{3.62'}
M^Y_1=-\begin{pmatrix}
0 ~& 1 ~& 0\\
1 ~& 0 ~& 0\\
0 ~& 0 ~& 1\\
\end{pmatrix}\overline{M^X_1}\begin{pmatrix}
0 ~& 1 ~& 0\\
1 ~& 0 ~& 0\\
0 ~& 0 ~& 1\\
\end{pmatrix},~(M^Y_1)_{21}=-\ii\beta^Y,~\beta^Y=\overline{\beta^X}\begin{pmatrix}
0 ~& 1\\
1~& 0
\end{pmatrix}.
\ee

\subsection{Find asymptotic formula}
Define the approximate solution $M^{(app)}(x,t;k)$ by
\be\label{3.73}
M^{(app)}=\left\{\begin{aligned}
&M^{(k_0)},~\quad k\in D_\varepsilon(k_0),\\
&M^{(-k_0)},~~ k\in D_\varepsilon(-k_0),\\
&I,\qquad ~~~~{\text elsewhere}.
\end{aligned}
\right.
\ee
Let $\hat{M}(x,t;k)$ be
\be\label{3.74}
\hat{M}=M^{(2)}(M^{(app)})^{-1},
\ee
 then $\hat{M}(x,t;k)$ satisfies the following RH problem
\be\label{3.75}
\begin{aligned}
\hat{M}_+(x,t;k)&=\hat{M}_-(x,t;k)\hat{J}(x,t,k),\quad k\in\hat{\Sigma},\\
\hat{M}(x,t;k)&\rightarrow I,\qquad\qquad\qquad\qquad\quad\text{as}~k\rightarrow\infty,
\end{aligned}
\ee
where the jump contour $\hat{\Sigma}=\Sigma\cup\partial D_\varepsilon(k_0)\cup\partial D_\varepsilon(-k_0)$ is depicted in Fig. \ref{fig6}, and the jump matrix $\hat{J}(x,t,k)$ is given by
\be\label{3.76}
\hat{J}=\left\{
\begin{aligned}
&M^{(app)}_-J^{(2)}(M^{(app)}_+)^{-1},~~ k\in\hat{\Sigma}\cap (D_\varepsilon(k_0)\cup D_\varepsilon(-k_0)),\\
&(M^{(app)})^{-1},\qquad\qquad\quad k\in\partial D_\varepsilon(k_0)\cup\partial D_\varepsilon(-k_0),\\
&J^{(2)},\qquad\qquad\qquad\quad~~ k\in\hat{\Sigma}\setminus (\overline{D_\varepsilon(k_0)}\cup\overline{D_\varepsilon(-k_0)}).
\end{aligned}
\right.
\ee
\begin{figure}[htbp]
  \centering
  \includegraphics[width=3.5in]{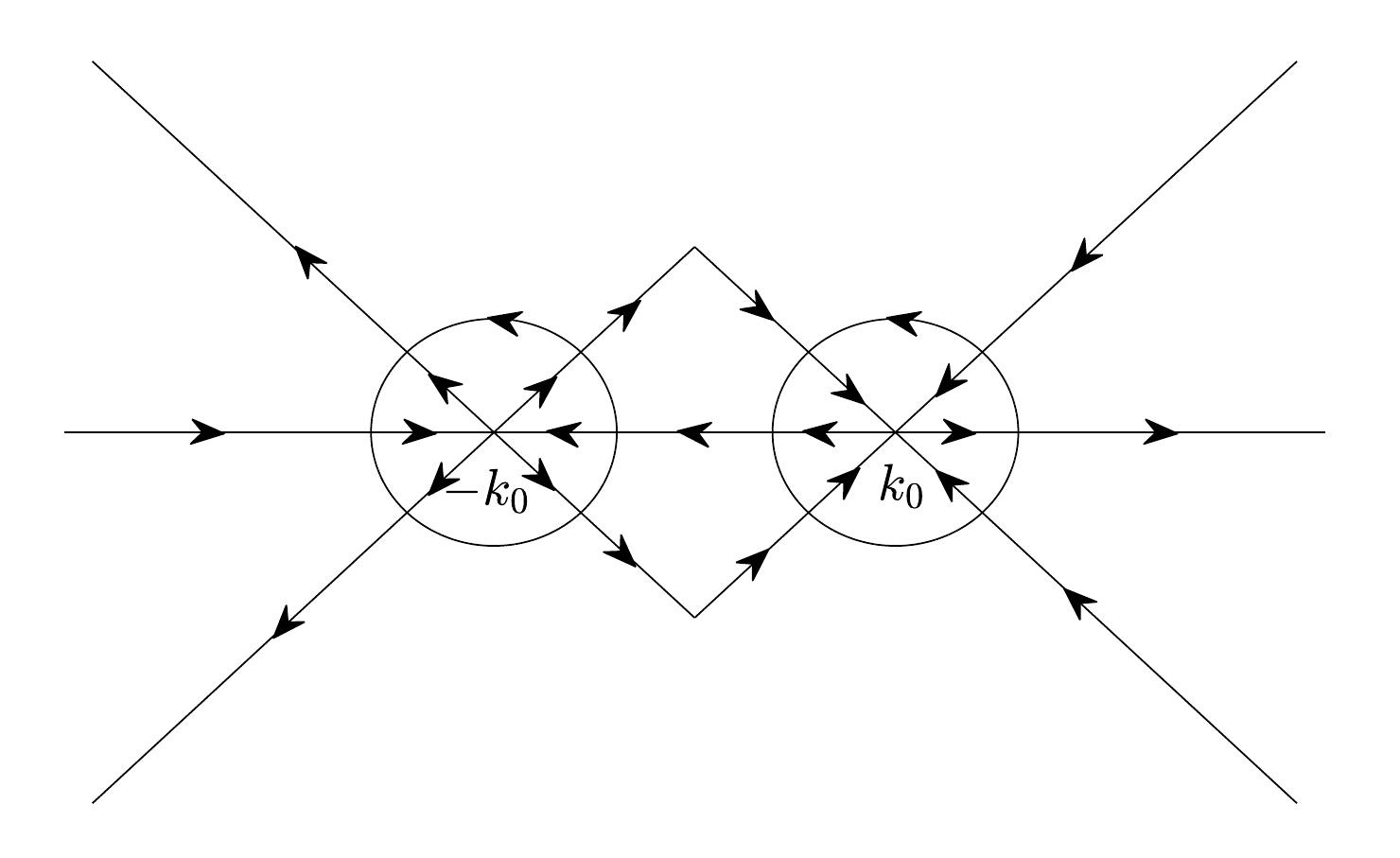}
  \caption{The contour $\hat{\Sigma}$.}\label{fig6}
\end{figure}
For convenience, we rewrite $\hat{\Sigma}$ as follows:
$$\hat{\Sigma}=(\partial D_\varepsilon(-k_0)\cup\partial D_\varepsilon(k_0))\cup(\mathcal{X}_{-k_0}^\varepsilon\cup\mathcal{X}_{k_0}^\varepsilon)\cup\hat{\Sigma}_1\cup\hat{\Sigma}_2,$$
where
$$\hat{\Sigma}_1=\bigcup_1^4\Sigma_j\setminus(D_\varepsilon(-k_0)\cup D_\varepsilon(k_0)),~\hat{\Sigma}_2=\bigcup_5^{6}\Sigma_j.$$
 Then we have the following lemma if we let $\hat{w}=\hat{J}-I$.
\begin{lemma}\label{lem3.4}
For $1\leq p\leq\infty$, the following estimates hold for $t>3$ and $\zeta\in\mathcal{I}$,
\bea
\|\hat{w}\|_{L^p(\partial D_\varepsilon(-k_0)\cup\partial D_\varepsilon(k_0))}&\leq& Ct^{-1/2},\label{3.77}\\
\|\hat{w}\|_{L^p(\mathcal{X}_{-k_0}^\varepsilon\cup\mathcal{X}_{k_0}^\varepsilon)}&\leq&Ct^{-\frac{1}{2}-\frac{1}{2p}}\ln t,\label{3.78}\\
\|\hat{w}\|_{L^p(\hat{\Sigma}_1)}&\leq&C\e^{-ct},\label{3.79}\\
\|\hat{w}\|_{L^p(\hat{\Sigma}_2)}&\leq&Ct^{-3/2}.\label{3.80}
\eea
\end{lemma}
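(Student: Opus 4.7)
The plan is to handle the four components of $\hat\Sigma$ appearing in \eqref{3.76} separately, since $\hat J$ has a genuinely different structure on each. The easiest piece is the pair of circles $\partial D_\varepsilon(\pm k_0)$, on which $\hat w = (M^{(app)})^{-1} - I$ by definition. The $L^\infty$ bounds \eqref{3.60} and \eqref{3.60'} of Lemma \ref{lem3.3} give $\|\hat w\|_{L^\infty} = O(t^{-1/2})$, and since both circles have fixed, $t$-independent length the same bound transfers to every $L^p$ norm, proving \eqref{3.77}.

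On the real-line remainder piece $\hat\Sigma_2 = \Sigma_5 \cup \Sigma_6$ the jump matrices $J^{(2)}_5, J^{(2)}_6$ from \eqref{3.38} are each a product of two unit-triangular factors whose off-diagonal entries have the form $\rho_{j,r}\delta(\det\delta)\e^{t\Phi}$ or $(\det\delta)^{-1}\delta^{-1}\rho_{j,r}\e^{-t\Phi}$. For $k\in\bfR$ the exponential is purely oscillatory, and $\delta, \det\delta$ are uniformly bounded by \eqref{3.11}, so expanding the product gives $|\hat w|\leq C\sum_j |\rho_{j,r}| + C\sum_{i,j}|\rho_{i,r}||\rho_{j,r}|$ pointwise. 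Parts (3) and (4) of Lemma \ref{lem3.1} then yield \eqref{3.80} in every $L^p$, the quadratic cross term being even smaller.

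For the analytic cross-legs $\hat\Sigma_1$, the jumps $J^{(2)}_1,\ldots,J^{(2)}_4$ are unit-triangular with off-diagonal entries of the form $\pm \rho_{j,a}\delta(\det\delta)\e^{t\Phi}$ or $\mp(\det\delta)^{-1}\delta^{-1}\rho_{j,a}\e^{-t\Phi}$, and each leg lies in the half-plane where the corresponding exponential decays. Combining \eqref{3.17} with $\delta,\det\delta\in L^\infty$,
$$
|\rho_{j,a}\,\e^{\pm t\Phi}| \leq C\,\e^{\tfrac{t}{4}|\text{Re}\,\Phi|}\e^{-t|\text{Re}\,\Phi|} = C\,\e^{-\tfrac{3t}{4}|\text{Re}\,\Phi|}.
$$
A direct inspection of $\Phi(\zeta,k) = 2\ii\zeta k - 8\ii k^3$ along each leg, together with the separation $|k\mp k_0|\geq \varepsilon$ enforced on $\hat\Sigma_1$, shows that $|\text{Re}\,\Phi(\zeta,k)|\geq c > 0$ uniformly for $k\in\hat\Sigma_1$ and $\zeta\in\mathcal{I}$. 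Hence $|\hat w|\leq Ce^{-ct}$ pointwise on $\hat\Sigma_1$, which gives \eqref{3.79} at once.

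The most substantive case is the cross-cuts $\mathcal{X}_{\pm k_0}^\varepsilon$, where Lemma \ref{lem3.3} does the heavy lifting. Using that $M^{(app)} = M^{(\pm k_0)}$ satisfies the jump $M^{(app)}_+ = M^{(app)}_- J^{(\pm k_0)}$ on these contours, substituting into \eqref{3.76} and simplifying gives
$$
\hat w = M^{(app)}_-\bigl(J^{(2)} - J^{(\pm k_0)}\bigr)(J^{(\pm k_0)})^{-1}(M^{(app)}_-)^{-1}.
$$
The matrix $(J^{(\pm k_0)})^{-1}$ is itself unit-triangular and bounded in terms of $\rho(\pm k_0)$, while $M^{(app)}_- = \eta^{-\hat\sigma} M^X$ (respectively $\hat\eta^{-\hat\sigma} M^Y$) is uniformly bounded by the supremum estimate \eqref{3.56} of Theorem \ref{th3.1}. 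The resulting pointwise inequality $|\hat w|\leq C|J^{(2)} - J^{(\pm k_0)}|$ transfers the $L^p$ estimates \eqref{3.59} and \eqref{3.59'} of Lemma \ref{lem3.3} verbatim to $\hat w$, yielding \eqref{3.78}. The one subtlety I anticipate is maintaining uniformity of all constants in $\zeta\in\mathcal{I}$; this follows from the Schwartz regularity of $u_0$, which keeps $\rho(\pm k_0)$ in a bounded set as $\zeta$ varies, and requires no further work once the $\zeta$-dependence in \eqref{3.56} has been tracked.
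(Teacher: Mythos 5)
Your proof is correct and follows essentially the same route as the paper's: the $L^\infty$ bounds \eqref{3.60}, \eqref{3.60'} on the fixed-length circles for \eqref{3.77}, the conjugation identity $\hat{w}=M^{(app)}_-(J^{(2)}-J^{(\pm k_0)})(M^{(app)}_+)^{-1}$ together with \eqref{3.59}, \eqref{3.59'} and \eqref{3.56} for \eqref{3.78}, the exponential decay of $\mathrm{Re}\,\Phi$ away from $\pm k_0$ combined with the analytic bounds on $\rho_{j,a}$ for \eqref{3.79}, and the smallness of the remainders $\rho_{j,r}$ from Lemma \ref{lem3.1} for \eqref{3.80}. The only quibble is that on the unbounded legs of $\hat{\Sigma}_1$ the relevant bound on $\rho_{1,a},\rho_{4,a}$ is \eqref{3.18} rather than \eqref{3.17} (which is restricted to $|k|<K$), exactly as the paper uses.
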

\begin{proof}
The inequality \eqref{3.77} is a consequence of \eqref{3.60} and \eqref{3.76}.

For $k\in\mathcal{X}_{k_0}^\varepsilon$, we find $$\hat{w}=M^{(k_0)}_-(J^{(2)}-J^{(k_0)})(M^{(k_0)}_+)^{-1}.$$ Therefore, it follows from \eqref{3.59} that the estimate \eqref{3.78} holds.

For $k\in \Om_4\cap\Sigma_4$, let $k=k_0+u\e^{\frac{\ii\pi}{4}}$, $\varepsilon<u<\infty$, then
\berr
\text{Re}\Phi(\zeta,k)=4u^2(\sqrt{2}u+6k_0).
\eerr
Since $\hat{w}$ only has a nonzero $-\frac{\delta^{-1}\rho_{4,a}\e^{-t\Phi}}{\det\delta}$ in $(12)$ entry, hence, for $t\geq1$, by \eqref{3.11} and \eqref{3.18}, we get
\bea
|\hat{w}_{12}|&=&\bigg|-\frac{\delta^{-1}\rho_{4,a}\e^{-t\Phi}}{\det\delta}\bigg|
\leq C|\rho_{4,a}|\e^{-t|\text{Re}\Phi|}\nn\\
&\leq&\frac{C}{1+|k|^2}\e^{-\frac{3t}{4}|\text{Re}\Phi|}\leq  C\e^{-c\varepsilon^2t}.\nn
\eea
In a similar way, the other estimates on $\hat{\Sigma}_1$ hold. This proves \eqref{3.79}.

Since the matrix $\hat{w}$ on $\hat{\Sigma}_2$ only involves the small remainders $\{\rho_{j,r}\}_1^4$, thus, by Lemma \ref{lem3.1}, the estimate \eqref{3.80} follows.
\end{proof}
The estimates in Lemma \ref{lem3.4} imply that
\be\label{3.81}
\begin{aligned}
\|\hat{w}\|_{(L^1\cap L^2)(\hat{\Sigma})}&\leq Ct^{-1/2},\\
\|\hat{w}\|_{L^\infty(\hat{\Sigma})}&\leq Ct^{-1/2}\ln t,
\end{aligned}
\quad t>3,~ \zeta\in\mathcal{I}.
\ee
Let $\hat{C}$ denote the Cauchy operator associated with $\hat{\Sigma}$:
\berr
(\hat{C}f)(k)=\int_{\hat{\Sigma}}\frac{f(s)}{s-k}\frac{\dd s}{2\pi\ii},\quad k\in\bfC\setminus\hat{\Sigma},~f\in L^2(\hat{\Sigma}).
\eerr
We denote the boundary values of $\hat{C}f$ from the left and right sides of $\hat{\Sigma}$ by $\hat{C}_+f$ and $\hat{C}_-f$, respectively. As is well known, the operators $\hat{C}_\pm$ are bounded from $L^2(\hat{\Sigma})$ to $L^2(\hat{\Sigma})$, and $\hat{C}_+-\hat{C}_-=I$, here $I$ denotes the identity operator.

Define the operator $\hat{C}_{\hat{w}}$: $L^2(\hat{\Sigma})+L^\infty(\hat{\Sigma})\rightarrow L^2(\hat{\Sigma})$ by $\hat{C}_{\hat{w}}f=\hat{C}_-(f\hat{w}),$ that is, $\hat{C}_{\hat{w}}$ is defined by $\hat{C}_{\hat{w}}(f)=\hat{C}_+(f\hat{w}_-)+\hat{C}_-(f\hat{w}_+)$ where we have chosen, for simplicity, $\hat{w}_+=\hat{w}$ and $\hat{w}_-=0$. Then, by \eqref{3.81}, we find
\be\label{3.82}
\|\hat{C}_{\hat{w}}\|_{B(L^2(\hat{\Sigma}))}\leq C\|\hat{w}\|_{L^\infty(\hat{\Sigma})}\leq Ct^{-1/2}\ln t,
\ee
where $B(L^2(\hat{\Sigma}))$ denotes the Banach space of bounded linear operators $L^2(\hat{\Sigma})\rightarrow L^2(\hat{\Sigma})$. Therefore, there exists a $T>0$ such that $I-\hat{C}_{\hat{w}}\in B(L^2(\hat{\Sigma}))$ is invertible for all $\zeta\in\mathcal{I},$ $t>T$. Following this, we may define the $2\times2$ matrix-valued function $\hat{\mu}(x,t;k)$ whenever $t>T$ by
\be\label{3.83}
\hat{\mu}=I+\hat{C}_{\hat{w}}\hat{\mu}.
\ee
Then
\be\label{3.84}
\hat{M}(x,t;k)=I+\frac{1}{2\pi\ii}\int_{\hat{\Sigma}}\frac{(\hat{\mu}\hat{w})(x,t;s)}{s-k}\dd s,\quad k\in\bfC\setminus\hat{\Sigma}
\ee
is the unique solution of the RH problem \eqref{3.75} for $t>T$.

Moreover, standard estimates using the Neumann series shows that the function $\hat{\mu}(x,t;k)$ satisfies
\be\label{3.85}
\|\hat{\mu}(x,t;\cdot)-I\|_{L^2(\hat{\Sigma})}=O(t^{-1/2}),\quad t\rightarrow\infty,~\zeta\in\mathcal{I}.
\ee
In fact, equation \eqref{3.83} is equivalent to $\hat{\mu}=I+(I-\hat{C}_{\hat{w}})^{-1}\hat{C}_{\hat{w}}I$. Using the Neumann series, we get
$$\|(I-\hat{C}_{\hat{w}})^{-1}\|_{B(L^2(\hat{\Sigma}))}\leq\frac{1}{1-\|\hat{C}_{\hat{w}}\|_{B(L^2(\hat{\Sigma}))}}$$
whenever $\|\hat{C}_{\hat{w}}\|_{B(L^2(\hat{\Sigma}))}<1$. Thus, we find
\bea
\|\hat{\mu}(x,t;\cdot)-I\|_{L^2(\hat{\Sigma})}&=&\|(I-\hat{C}_{\hat{w}})^{-1}\hat{C}_{\hat{w}}I\|_{L^2(\hat{\Sigma})}\nn\\
&\leq&\|(I-\hat{C}_{\hat{w}})^{-1}\|_{B(L^2(\hat{\Sigma}))}\|\hat{C}_-(\hat{w})\|_{L^2(\hat{\Sigma})}\nn\\
&\leq&\frac{C\|\hat{w}\|_{L^2(\hat{\Sigma})}}{1-\|\hat{C}_{\hat{w}}\|_{B(L^2(\hat{\Sigma}))}}\leq C\|\hat{w}\|_{L^2(\hat{\Sigma})}\nn
\eea
for all $t$ large enough and all $\zeta\in\mathcal{I}$. In view of \eqref{3.81}, this gives \eqref{3.85}.
It follows from \eqref{3.84} that
\be\label{3.86}
\lim_{k\rightarrow\infty}k(\hat{M}(x,t;k)-I)=-\frac{1}{2\pi\ii}\int_{\hat{\Sigma}}(\hat{\mu}\hat{w})(x,t;k)\dd k.
\ee
Using \eqref{3.80} and \eqref{3.85}, we have
\bea
\int_{\hat{\Sigma}_2}(\hat{\mu}\hat{w})(x,t;k)\dd k&=&\int_{\hat{\Sigma}_2}\hat{w}(x,t;k)\dd k+\int_{\hat{\Sigma}_2}(\hat{\mu}(x,t;k)-I)\hat{w}(x,t;k)\dd k\nn\\
&\leq&\|\hat{w}\|_{L^1(\hat{\Sigma}_2)}+\|\hat{\mu}-I\|_{L^2(\hat{\Sigma}_2)}\|\hat{w}\|_{L^2(\hat{\Sigma}_2)}\nn\\
&\leq&Ct^{-3/2},\quad t\rightarrow\infty.\nn
\eea
Similarly, by \eqref{3.78} and \eqref{3.85}, the contribution from $\mathcal{X}_{-k_0}^\varepsilon\cup\mathcal{X}_{k_0}^\varepsilon$ to the right-hand side of \eqref{3.86} is $$O(\|\hat{w}\|_{L^1(\mathcal{X}_{-k_0}^\varepsilon\cup\mathcal{X}_{k_0}^\varepsilon)}+
\|\hat{\mu}-I\|_{L^2(\mathcal{X}_{-k_0}^\varepsilon\cup\mathcal{X}_{k_0}^\varepsilon)}
\|\hat{w}\|_{L^2(\mathcal{X}_{-k_0}^\varepsilon\cup\mathcal{X}_{k_0}^\varepsilon)})=O(t^{-1}\ln t),\quad t\rightarrow\infty.$$
By \eqref{3.79} and \eqref{3.85}, the contribution from $\hat{\Sigma}_1$ to the right-hand side of \eqref{3.86} is
\berr
O(\|\hat{w}\|_{L^1(\hat{\Sigma}_1)}+\|\hat{\mu}-I\|_{L^2(\hat{\Sigma}_1)}
\|\hat{w}\|_{L^2(\hat{\Sigma}_1)})=O(\e^{-ct}),\quad t\rightarrow\infty.
\eerr
Finally, by \eqref{3.61}, \eqref{3.61'}, \eqref{3.76}, \eqref{3.77} and \eqref{3.85}, we can get
\bea
&&-\frac{1}{2\pi\ii}\int_{\partial D_\varepsilon(-k_0)\cup\partial D_\varepsilon(k_0)}(\hat{\mu}\hat{w})(x,t;k)\dd k\nn\\
&=&-\frac{1}{2\pi\ii}\int_{\partial D_\varepsilon(-k_0)\cup\partial D_\varepsilon(k_0)}\hat{w}(x,t;k)\dd k-\frac{1}{2\pi\ii}\int_{\partial D_\varepsilon(-k_0)\cup\partial D_\varepsilon(k_0)}(\hat{\mu}(x,t;k)-I)\hat{w}(x,t;k)\dd k\nn\\
&=&-\frac{1}{2\pi\ii}\int_{\partial D_\varepsilon(-k_0)}\bigg((M^{(-k_0)})^{-1}(x,t;k)-I\bigg)\dd k-\frac{1}{2\pi\ii}\int_{\partial D_\varepsilon(k_0)}\bigg((M^{(k_0)})^{-1}(x,t;k)-I\bigg)\dd k\nn\\
&&+O(\|\hat{\mu}-I\|_{L^2(\partial D_\varepsilon(-k_0)\cup\partial D_\varepsilon(k_0))}
\|\hat{w}\|_{L^2(\partial D_\varepsilon(-k_0)\cup\partial D_\varepsilon(k_0))})\nn\\
&=&\frac{\eta^{-\hat{\sigma}}M^X_1}
{\sqrt{48tk_0}}+\frac{\hat{\eta}^{-\hat{\sigma}}M^Y_1}
{\sqrt{48tk_0}}+O(t^{-1}),\quad t\rightarrow\infty.\nn
\eea
Thus, we obtain the following important relation
\be\label{3.91}
\lim_{k\rightarrow\infty}k(\hat{M}(x,t;k)-I)=\frac{\eta^{-\hat{\sigma}}M^X_1+\hat{\eta}^{-\hat{\sigma}}M^Y_1}
{\sqrt{48t k_0}}+O(t^{-1}\ln t),\quad t\rightarrow\infty.
\ee

Taking into account that \eqref{2.29}, \eqref{3.3}, \eqref{3.4}, \eqref{3.36} and \eqref{3.91}, for sufficient large $k\in\bfC\setminus\hat{\Sigma}$, we get
\bea\label{4.92}
\begin{pmatrix}
\bar{u}(x,t)~& u(x,t)
\end{pmatrix}&=&2\ii\lim_{k\rightarrow\infty}(kM(x,t;k))_{21}\nn\\
&=&2\ii\lim_{k\rightarrow\infty}k(\hat{M}(x,t;k)-I)_{21}\nn\\
&=&2\ii\bigg(\frac{-\ii\eta^2\beta^X-\ii\hat{\eta}^2\beta^Y}
{\sqrt{48tk_0}}\bigg)+O\bigg(\frac{\ln t}{t}\bigg).\nn
\eea

Collecting the above computations, we obtain our main results stated as the following theorem.
\begin{theorem}\label{th3.2}
Let $u_0(x)$ lie in the Schwartz space $S(\bfR)$. Then, for any positive constant $M>1$, as $t\rightarrow\infty$, the solution $u(x,t)$ of the initial problem for Sasa-Satsuma equation \eqref{1.1} on the line satisfies the following asymptotic formula
\be
u(x,t)=\frac{u_{as}(x,t)}{\sqrt{t}}+O\bigg(\frac{\ln t}{t}\bigg),\quad t\rightarrow\infty,~\zeta\in\mathcal{I},
\ee
where the error term is uniform with respect to $x$ in the given range, and the leading-order
coefficient $u_{as}(x,t)$ is given by
\bea
u_{as}(x,t)&=&\frac{\nu\e^{-\frac{\pi\nu}{2}}}{\sqrt{24\pi k_0}}\bigg((192tk_0^3)^{\ii\nu}\e^{16\ii tk_0^3+2\chi(k_0)+\frac{\pi\ii}{4}}\Gamma(-\ii\nu)\frac{s_{32}(k_0)}{s_{33}(k_0)}\nn\\
&&+(192tk_0^3)^{-\ii\nu}\e^{-16\ii tk_0^3+2\chi(-k_0)-\frac{\pi\ii}{4}}\Gamma(\ii\nu)\frac{\bar{s}_{31}(k_0)}{\bar{s}_{33}(k_0)}\bigg),
\eea
where $k_0$, $\nu$ and $\chi(k)$ are given by \eqref{3.1}, \eqref{3.6} and \eqref{3.7}, respectively.
\end{theorem}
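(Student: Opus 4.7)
The plan is to chase the reconstruction formula \eqref{2.29} through the chain of transformations $M \mapsto M^{(1)} \mapsto M^{(2)} \mapsto \hat{M}$ and extract the $1/\sqrt{t}$ coefficient from the asymptotic identity \eqref{3.91}.

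First, observe that the transformation $M^{(1)}=M\Delta^{-1}$ is inert at $k=\infty$ because $\Delta(k)\to I$, and the transformation $M^{(2)}=M^{(1)}G$ is inert for large $k$ since $G=I$ on $U_5\cup U_6$. Similarly, $M^{(app)}(x,t;k)=I$ outside a fixed compact set, so $\hat{M}=M^{(2)}(M^{(app)})^{-1}$ agrees with $M^{(2)}$ at infinity. Consequently,
\berr
\lim_{k\to\infty}k(M(x,t;k)-I)_{21}=\lim_{k\to\infty}k(\hat{M}(x,t;k)-I)_{21}.
\eerr
Combining this with \eqref{2.29} and the key estimate \eqref{3.91}, I obtain
\berr
\begin{pmatrix}\bar u(x,t)&u(x,t)\end{pmatrix}
=\frac{2\ii}{\sqrt{48tk_0}}\bigl(\eta^{-\hat\sigma}M_1^X+\hat\eta^{-\hat\sigma}M_1^Y\bigr)_{21}+O\!\left(\frac{\ln t}{t}\right).
\eerr

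Next, I unpack the $(21)$ entry. Conjugation by $\eta^{\hat\sigma}$ acts on an off-diagonal $(21)$ block by multiplication by $\eta^2$ (since the block sits in the $(2,1)$ slot in the $2\times2$ decomposition with scalar $(2,2)$), and likewise for $\hat\eta$. Using Theorem \ref{th3.1} which gives $(M_1^X)_{21}=-\ii\beta^X$ with $\beta^X=\tfrac{\nu\Gamma(-\ii\nu)\e^{\ii\pi/4-\pi\nu/2}}{\sqrt{2\pi}}\rho(k_0)$, together with the symmetry relation \eqref{3.62'} giving $(M_1^Y)_{21}=-\ii\beta^Y=-\ii\overline{\beta^X}\left(\begin{smallmatrix}0&1\\1&0\end{smallmatrix}\right)$, I arrive at the row vector
\berr
\begin{pmatrix}\bar u& u\end{pmatrix}
=\frac{2\ii}{\sqrt{48tk_0}}\bigl(-\ii\eta^2\beta^X-\ii\hat\eta^2\beta^Y\bigr)+O\!\left(\frac{\ln t}{t}\right).
\eerr

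To finish, I substitute the explicit forms $\eta^2=(192tk_0^3)^{\ii\nu}\e^{16\ii tk_0^3+2\chi(k_0)}$ and $\hat\eta^2=(192tk_0^3)^{-\ii\nu}\e^{-16\ii tk_0^3+2\chi(-k_0)}$ from \eqref{3.42} and \eqref{3.69}, read off the second component of the row vector (which represents $u(x,t)$), and use $\rho(k)=\bigl(s_{31}/s_{33},\,s_{32}/s_{33}\bigr)$ together with the flip matrix appearing in $\beta^Y$, which swaps the two components of $\overline{\rho(k_0)}$ so that the second component of $\overline{\beta^X}\bigl(\begin{smallmatrix}0&1\\1&0\end{smallmatrix}\bigr)$ produces the $\overline{s_{31}(k_0)}/\overline{s_{33}(k_0)}$ factor. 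Simplifying $\tfrac{2}{\sqrt{48k_0}}=\tfrac{1}{\sqrt{12k_0}}$ and combining with $\tfrac{\nu\e^{-\pi\nu/2}}{\sqrt{2\pi}}$ gives the prefactor $\tfrac{\nu\e^{-\pi\nu/2}}{\sqrt{24\pi k_0}}$, yielding exactly the claimed expression for $u_{as}(x,t)$.

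The main bookkeeping obstacle is keeping track of the block-matrix conjugations by $\eta^{\hat\sigma}$ and $\hat\eta^{\hat\sigma}$, verifying the sign and phase coming from $\beta^Y=\overline{\beta^X}\bigl(\begin{smallmatrix}0&1\\1&0\end{smallmatrix}\bigr)$ so that the two contributions assemble into complex conjugates as the two terms in $u_{as}$, and confirming that no contributions have been missed on $\hat\Sigma$: those are controlled by Lemma \ref{lem3.4} which bounds the contributions from $\mathcal X_{\pm k_0}^\varepsilon$, $\hat\Sigma_1$, and $\hat\Sigma_2$ by $O(t^{-1}\ln t)$, $O(\e^{-ct})$, and $O(t^{-3/2})$ respectively, all of which are absorbed into the $O(t^{-1}\ln t)$ error term. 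The initial condition reconstruction is already built into Theorem 2.1, so no further verification is needed at $t=0$.
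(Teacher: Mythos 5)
Your proposal is correct and follows essentially the same route as the paper: passing the reconstruction formula \eqref{2.29} through the chain $M\mapsto M^{(1)}\mapsto M^{(2)}\mapsto\hat{M}$ (noting each step is inert in the $O(1/k)$ coefficient of the $(21)$ block), invoking the key identity \eqref{3.91} whose error terms are controlled by Lemma \ref{lem3.4}, and then substituting $\eta^2$, $\hat{\eta}^2$, $\beta^X$ and $\beta^Y=\overline{\beta^X}\bigl(\begin{smallmatrix}0&1\\1&0\end{smallmatrix}\bigr)$ to read off the second component. The bookkeeping you describe (the flip matrix producing $\bar{s}_{31}/\bar{s}_{33}$, and $\tfrac{2}{\sqrt{48k_0}}\cdot\tfrac{1}{\sqrt{2\pi}}=\tfrac{1}{\sqrt{24\pi k_0}}$) matches the paper's computation exactly.
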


{\bf Acknowledgments.}

This work was supported in part by the National Natural Science Foundation of
China under grants 11731014, 11571254 and 11471099.

\appendix
\section{Appendix. Proof of Theorem \ref{th3.1}}
\setcounter{equation}{0}
To solve the model RH problem \eqref{3.53}, it is convenient to introduce the following transformation
\berr
\Psi(z)=M^X(z)z^{\ii\nu\sigma}\e^{\frac{\ii z^2}{4}\sigma},
\eerr
which implies that
\be
\Psi_+(z)=\Psi_-(z)v(k_0),~v(k_0)=z^{-\ii\nu\hat{\sigma}}\e^{-\frac{\ii z^2}{4}\hat{\sigma}}J^X,
\ee
where we suppress the $x$ and $t$ dependence for clarity. Since the jump matrix is constant along each ray, we have
\berr
\frac{\dd\Psi_+(z)}{\dd z}=\frac{\dd\Psi_-(z)}{\dd z}v(k_0),
\eerr
from  which it follows that $\frac{\dd\Psi(z)}{\dd z}\Psi^{-1}(z)$ has no jump discontinuity along any of the rays. Moreover,
\bea
\frac{\dd\Psi(z)}{\dd z}\Psi^{-1}(z)&=&\frac{\dd M^X(z)}{\dd z}(M^X)^{-1}(z)+\frac{\ii}{2}zM^X(z)\sigma(M^X)^{-1}(z)+\frac{\ii\nu}{2}M^X(z)\sigma(M^X)^{-1}(z)\nn\\
&=&\frac{\ii z}{2}\sigma-\frac{\ii}{2}[\sigma,M_1^X]+O\bigg(\frac{1}{z}\bigg).\nn
\eea
It follows by Liouville's argument that
\be\label{A.2}
\frac{\dd\Psi(z)}{\dd z}=\bigg(\frac{\ii z}{2}\sigma+\beta\bigg)\Psi(z),
\ee
where
\berr
\beta=-\frac{\ii}{2}[\sigma,M_1^X]=\begin{pmatrix}
0~& \beta_{12}\\
\beta_{21}~& 0\\
\end{pmatrix}.
\eerr
In particular,
\be
(M_1^X)_{21}=-\ii\beta_{21}.
\ee

It is possible to show that the solution of the RH problem \eqref{3.53} for $M^X(z)$ is unique since $\det J^X=1$, and therefore we have $(M^X(\bar{k}))^\dag=(M^X(k))^{-1}$, which implies that $$\beta_{12}=-\beta^\dag_{21}.$$ From equation \eqref{A.2} we obtain
\be\label{A.4}
\begin{aligned}
\frac{\dd^2\Psi_{22}(z)}{\dd z^2}&=\bigg(-\frac{\ii}{2}-\frac{z^2}{4}+\beta_{21}\beta_{12}\bigg)\Psi_{22}(z),\\
\beta_{21}\Psi_{12}(z)&=\frac{\dd\Psi_{22}(z)}{\dd z}+\frac{\ii}{2}z\Psi_{22},\\
\frac{\dd^2\beta_{21}\Psi_{11}(z)}{\dd z^2}&=\bigg(\frac{\ii}{2}-\frac{z^2}{4}+\beta_{21}\beta_{12}\bigg)\beta_{21}\Psi_{11}(z),\\
\Psi_{21}(z)&=\frac{1}{\beta_{21}\beta_{12}}\bigg(\frac{\dd\beta_{21}\Psi_{11}(z)}{\dd z}-\frac{\ii}{2}z\beta_{21}\Psi_{11}\bigg).
\end{aligned}
\ee
It is well known that the Weber's equation
\berr
\frac{\dd^2g(s)}{\dd s^2}+\bigg(\frac{1}{2}-\frac{s^2}{4}+a\bigg)g(s)=0
\eerr
has the solution
\be
g(s)=c_1D_a(s)+c_2D_a(-s),
\ee
where $D_a(\cdot)$ denotes the standard parabolic-cylinder function and satisfies
\bea
\frac{\dd D_a(s)}{\dd s}&+&\frac{s}{2}D_a(s)-aD_{a-1}(s)=0,\label{A.6}\\
D_{a-1}(s)&=&\frac{\Gamma(a)}{\sqrt{2\pi}}\bigg(\e^{\frac{\ii\pi}{2}(a-1)}D_{-a}(\ii s)+\e^{-\frac{\ii\pi}{2}(a-1)}D_{-a}(-\ii s)\bigg).\label{A.7}
\eea
Furthermore, as $s\rightarrow\infty$, we know from \cite{WW} that
\berr
D_a(s)=\left\{\begin{aligned}
&s^a\e^{-\frac{s^2}{4}}(1+O(s^{-2})),\qquad\qquad\qquad\qquad\quad\qquad\qquad\qquad\qquad|\arg s|<\frac{3\pi}{4},\\
&s^a\e^{-\frac{s^2}{4}}(1+O(s^{-2}))-\frac{\sqrt{2\pi}}{\Gamma(-a)}\e^{a\pi\ii+\frac{s^2}{4}}s^{-a-1}(1+O(s^{-2})),\qquad~\frac{\pi}{4}<\arg s<\frac{5\pi}{4},\\
&s^a\e^{-\frac{s^2}{4}}(1+O(s^{-2}))-\frac{\sqrt{2\pi}}{\Gamma(-a)}\e^{-a\pi\ii+\frac{s^2}{4}}s^{-a-1}(1+O(s^{-2})),~-\frac{5\pi}{4}<\arg s<-\frac{\pi}{4},
\end{aligned}
\right.
\eerr
where $\Gamma$ is the Gamma function. Setting $a=\ii\beta_{21}\beta_{12}$, we have
\bea
\Psi_{22}(z)&=&c_1D_a(\e^{-\frac{3\ii\pi}{4}}z)+c_2D_a(\e^{\frac{\pi\ii}{4}}z),\nn\\
\beta_{21}\Psi_{11}(z)&=&c_3D_{-a}(\e^{\frac{3\ii\pi}{4}}z)+c_4D_{-a}(\e^{-\frac{\pi\ii}{4}}z).\nn
\eea
Since as $z\rightarrow\infty$, we have
\be
\Psi_{11}\rightarrow z^{\ii\nu}\e^{\frac{\ii z^2}{4}}I,\quad \Psi_{22}\rightarrow z^{-\ii\nu}\e^{-\frac{\ii z^2}{4}},
\ee
hence, as $\arg z\in(-\frac{3\pi}{4},-\frac{\pi}{4})$, we immediately arrive that
\be
\begin{aligned}
\Psi_{22}(z)&=\e^{-\frac{\pi\nu}{4}}D_a(\e^{\frac{\pi\ii}{4}}z),~a=-\ii\nu,\\
\beta_{21}\Psi_{11}(z)&=\beta_{21}\e^{\frac{3\pi\nu}{4}}D_{-a}(\e^{\frac{3\pi\ii}{4}}z).
\end{aligned}
\ee
According to \eqref{A.4} and \eqref{A.6}, we have
\be
\begin{aligned}
\Psi_{21}(z)&=\beta_{21}\e^{\frac{\pi(\ii+3\nu)}{4}}D_{-a-1}(\e^{\frac{3\pi\ii}{4}}z),\\
\beta_{21}\Psi_{12}(z)&=\e^{\frac{\pi(\ii-\nu)}{4}}aD_{a-1}(\e^{\frac{\pi\ii}{4}}z).
\end{aligned}
\ee

Similarly, for $\arg z\in(-\frac{\pi}{4},\frac{\pi}{4})$, we can get
\be
\begin{aligned}
\Psi_{22}(z)&=\e^{-\frac{\pi\nu}{4}}D_a(\e^{\frac{\pi\ii}{4}}z),~a=-\ii\nu,\\
\beta_{21}\Psi_{11}(z)&=\beta_{21}\e^{-\frac{\pi\nu}{4}}D_{-a}(\e^{-\frac{\pi\ii}{4}}z),\\
\Psi_{21}(z)&=\beta_{21}\e^{-\frac{\pi(3\ii+\nu)}{4}}D_{-a-1}(\e^{-\frac{\pi\ii}{4}}z),\\
\beta_{21}\Psi_{12}(z)&=\e^{\frac{\pi(\ii-\nu)}{4}}aD_{a-1}(\e^{\frac{\pi\ii}{4}}z).
\end{aligned}
\ee
Along the ray $\arg z=-\frac{\pi}{4}$, we have
\berr
\Psi_+(z)=\Psi_-(z)\begin{pmatrix}
I ~& 0\\
-\rho(k_0) ~& 1
\end{pmatrix},
\eerr
thus,
\be\label{A.12}
\beta_{12}\e^{\frac{3\pi\nu}{4}}D_{-a}(\e^{\frac{3\pi\ii}{4}}z)=\beta_{21}\e^{-\frac{\pi\nu}{4}}D_{-a}(\e^{-\frac{\pi\ii}{4}}z)
-\e^{\frac{\pi(\ii-\nu)}{4}}aD_{a-1}(\e^{\frac{\pi\ii}{4}}z)\rho(k_0).
\ee
On the other hand, it follows from \eqref{A.7} that
\be\label{A.13}
D_{a-1}(\e^{\frac{\pi\ii}{4}}z)=\frac{\Gamma(a)}{\sqrt{2\pi}}
\bigg(\e^{\frac{\ii\pi}{2}(a-1)}D_{-a}(\e^{\frac{3\pi\ii}{4}}z)+\e^{-\frac{\ii\pi}{2}(a-1)}D_{-a}(\e^{-\frac{\pi\ii}{4}}z)\bigg).
\ee
Compared the coefficients of \eqref{A.12} with \eqref{A.13}, we can find that
\be
\beta_{21}=\frac{\Gamma(-\ii\nu)}{\sqrt{2\pi}}\e^{\frac{\pi\ii}{4}-\frac{\pi\nu}{2}}\nu\rho(k_0).
\ee
The estimate \eqref{3.56} is an consequence of Lemma A.4 in \cite{JL3} and the asymptotic formula \eqref{3.54}.
\medskip
\small{

}
\end{document}